\newtheorem{theorem}{Theorem}[section]
\newtheorem{remark}{Remark}[section]
\newtheorem{ap}{Assumption}[section]
\newtheorem{prop}{Proposition}[section]
\newtheorem{lm}{Lemma}[section]
\let \ssection=\section
\renewcommand{\section}{\setcounter{equation}{0}\ssection}
\newcommand{\<}{\langle}
\renewcommand{\>}{\rangle}
\newcommand{\nn}{\nonumber}
\begin{document}

\begin{frontmatter}

\title{Convergence Analysis of an Unconditionally Energy Stable  Linear Crank-Nicolson Scheme for the Cahn-Hilliard Equation}

 \author[address1,address2]{Lin Wang}
 \ead{wanglin@lsec.cc.ac.cn}

 \author[address1,address2]{Haijun Yu\corref{corau}}
 \ead{hyu@lsec.cc.ac.cn}

 \address[address1]{School of Mathematical Sciences,
 	University of Chinese Academy of Sciences, Beijing
 	100049, China}

 \address[address2]{NCMIS \& LSEC, Institute of
 	Computational Mathematics and Scientific/Engineering
 	Computing, Academy of Mathematics and Systems
 	Science, Beijing 100190, China}

 \cortext[corau]{Corresponding author.}

\begin{abstract}
  Efficient and unconditionally stable high order time
  marching schemes are very important but not easy to
  construct for nonlinear phase dynamics. In this paper, we
  propose and analysis an efficient stabilized linear
  Crank-Nicolson scheme for the Cahn-Hilliard equation with
  provable unconditional stability.  In this scheme the
  nonlinear bulk force are treated explicitly with two
  second-order linear stabilization terms.  The
  semi-discretized equation is a linear elliptic system
  with constant coefficients, thus robust and efficient
  solution procedures are guaranteed.  Rigorous error
  analysis show that, when the time step-size is small
  enough, the scheme is second order accurate in time with a
  prefactor controlled by some lower degree polynomial of
  $1/\varepsilon$. Here $\varepsilon$ is the interface
  thickness parameter. Numerical results are presented to
  verify the accuracy and efficiency of the scheme.
\end{abstract}

\begin{keyword}
	phase field model\sep
	Cahn-Hilliard equation \sep
	unconditionally stable\sep
	stabilized semi-implicit scheme \sep
	high order time marching
\end{keyword}

\end{frontmatter}

\section{Introduction}

In this paper, we consider numerical approximation for the
Cahn-Hilliard equation
\begin{equation}\label{eq:CH0}
\begin{cases}
\phi_{t}=-\gamma\Delta (\varepsilon \Delta \phi - \dfrac{1}{\varepsilon}f(\phi)),   & (x,t)\in \Omega \times (0,T],\\
\phi |_{t=0} =\phi_{0}(x), &x \in \Omega,
\end{cases}
\end{equation}
with Neumann boundary condition
\begin{equation}\label{eq:CH:nbc0}
\partial_{n}\phi =0, \; \partial_{n}(\varepsilon \Delta \phi - \dfrac{1}{\varepsilon}f(\phi)) =0, \quad
x\in \partial\Omega.
\end{equation}
Here $\Omega \in R^{d}, d=2,3$ is a bounded domain with a
locally Lipschitz boundary, n is the outward normal, T is a
given time, $\phi(x,t)$ is the phase-field
variable. Function $f(\phi)=F'(\phi)$, with $F(\phi)$ is a
given energy potential with two local minima, e.g. the
double well potential $F(\phi)=\frac{1}{4}(\phi^2-1)^2$. The
two minima of $F$ produces two phases, with the typical
thickness of the interface between two phases given by
$\varepsilon$. $\gamma$ is a time relaxation parameter, its
value is related to the time unit used in a physical
process.

The equation \eqref{eq:CH0} is a fourth-order partial
differential equation, which is not easy to solve using a finite
element method.  However, if we introduce a new variable
$\mu$, called chemical potential, for
$-\varepsilon \Delta \phi + \dfrac{1}{\varepsilon}f(\phi)$,
the equation \eqref{eq:CH0} can be rewritten as a system of
two second order equations
\begin{equation}\label{eq:CH}
  \begin{cases}
    \phi_{t}=\gamma\Delta \mu,   & (x,t)\in \Omega \times (0,T],\\
    \mu=-\varepsilon \Delta \phi + \dfrac{1}{\varepsilon}f(\phi),  &(x,t)\in \Omega \times (0,T],\\
    \phi |_{t=0} =\phi_{0}(x), &x \in \Omega.
  \end{cases}
\end{equation}
The corresponding Neumann boundary condition reads
\begin{equation}\label{eq:CH:nbc}
	\partial_{n}\phi =0, \; \partial_{n}\mu =0, \quad
x\in \partial\Omega.
\end{equation}

The Cahn-Hilliard equation was originally introduced by
Cahn-Hilliard~\cite{cahn_free_1958} to describe the phase
separation and coarsening phenomena in non-uniform systems
such as alloys, glasses and polymer mixtures. If the term
$\Delta \mu$ in equation \eqref{eq:CH} is replaced with
$-\mu$, one get the Allen-Cahn equation, which was
introduced by Allen and Cahn~\cite{allen_microscopic_1979}
to describe the motion of anti-phase boundaries in
crystalline solids.
The Cahn-Hilliard equation and the Allen-Cahn equation are
two widely used phase-field model. In a phase-field model,
the information of interface is encoded in a smooth phase
function $\phi$. In most parts of the domain $\Omega$, the
value of $\phi$ is close to local minima of $F$. The
interface is a thin layer of thickness $\varepsilon$
connecting regions of different local minima.  It is easy
to deal with dynamical process involving morphology changes
of interfaces using phase-field models. For
this reason, phase field models have been the subject of
many theoretical and numerical investigations (cf., for
instance, \cite{du_numerical_1991},
\cite{elliott_error_1992}, \cite{chen_spectrum_1994},
\cite{caffarelli_l_1995}, \cite{elliott_cahnhilliard_1996},
\cite{eyre_unconditionally_1998},
\cite{furihata_stable_2001}, \cite{liu_phase_2003},
\cite{feng_error_2004}, \cite{kessler_posteriori_2004},
\cite{shen_numerical_2010}, \cite{condette_spectral_2011}).

However, numerically solving the phase-field equations is not an
easy task, since the small parameter $\varepsilon$ in the Cahn-Hilliard 
equation makes the equation very stiff and
requires a high spatial and temporal grid resolution. 
To design an energy stable scheme, one should respect the physical dissipation law of the Cahn-Hilliard system. 
In fact, the Cahn-Hilliard equation is $H^{-1}$ gradient flow of the Ginzburg-Laudau energy
functional
\begin{equation}\label{eq:CH:E}
  E(\phi):=\int_{\Omega}\Big(\frac{\varepsilon}{2}|\nabla \phi|^{2} +\frac{1}{\varepsilon} F(\phi)\Big)dx
\end{equation}
More precisely, by taking the inner product of
(\ref{eq:CH}) with $\mu$, and integration in time, we immediately find the following
energy law for (\ref{eq:CH}):
\begin{equation}\label{eq:CH:Edis}
E(\phi(t))+\gamma\int_0^t\!\!\int_{\Omega}
  |\nabla\mu|^{2}dx = E(\phi_0),\; \forall\,t>0.
\end{equation}

Since the nonlinear energy $F$ is neither a convex nor a
concave function, treating it fully explicit or implicit in
a time discretization will not lead to an efficient
scheme. In fact, if the nonlinear force $f$ is treated fully
explicitly, the resulting scheme will require a very tiny
step-size to be stable(cf. for instance
\cite{shen_numerical_2010}). On the other hand, treating it
fully implicitly will lead to a nonlinear system, for which
the solution existence and uniqueness requires a restriction
on step-size as well (cf. e.g. \cite{feng_error_2004}). One
popular approach to solve this dilemma is the convex
splitting method \cite{elliott_global_1993,eyre_unconditionally_1998}, in which the convex
part of $F$ is treated implicitly and the concave part
treated explicitly. The scheme is of first order accurate
and unconditional stable. In each time step, one need solve
a nonlinear system. The solution existence and uniqueness is
guaranteed since the nonlinear system corresponds to a
convex optimization problem. The convex splitting method was
used widely, and several second order extensions were
derived in different situations
\cite{condette_spectral_2011,baskaran_energy_2013,
  chen_linear_2014,guo_h2_2016}, etc.  Another type
unconditional stable scheme is the secant-line method
proposed by \cite{du_numerical_1991}. It is also used and
extended in several other works,
e.g. \cite{furihata_stable_2001, kim_conservative_2004,
  feng_fully_2006,
  condette_spectral_2011,gomez_provably_2011,
  baskaran_energy_2013,zhang_adaptive_2013,
  benesova_implicit_2014}. Like the fully implicit method,
the usual second order convex splitting method and the
secant-type method for Cahn-Hilliard equation need a small
time step-size to guarantee the semi-discretized nonlinear
system has a unique solution (cf. for instance
\cite{du_numerical_1991, barrett_finite_1999}). To remove
the restriction on time step-size, a diffusive three-step
Crank-Nicolson scheme was introduced by \cite{guo_h2_2016}
and \cite{diegel_stability_2016} coupled with a second order
convex splitting. After time-discretization, one get a
nonlinear but unique solvable problem at each time step.

Recently, a new approach termed as invariant energy
quadratization (IEQ) was introduced to handle the nonlinear
energy. When applying to Cahn-Hilliard equation, it first
appeared in
\cite{guillen-gonzalez_linear_2013,guillen-gonzalez_second_2014}
as a Lagrange multiplier method. It then generalized by Yang
et al. and successfully extended to handle several very
complicated nonlinear phase-field models
\cite{yang_linear_2016,han_numerical_2017,yang_numerical_2017,yang_efficient_2017,yang_yu_efficient_2017}. In
the IEQ approach, a new variable which equals to the square
root of $F$ is introduced, so the energy is written into a
quadratic form in terms of the new variable. By using
semi-implicit treatments to the nonlinear equation using new variables, one get a linear and energy stable scheme. It is
straightforward to prove the unconditional stability for
both first order and second order IEQ schemes. Comparing to
the convex splitting approach, IEQ leads to well-structured
linear system which is easier to solve. The modified energy
in IEQ is an order-consistent approximation to the original
system energy. At each time step, it needs to solve a linear
system with time-varying coefficients.

Another trend of improving numerical schemes for phase-field
models focuses on algorithm efficiency. Chen and Shen, and
their coworkers
\cite{chen_applications_1998,zhu_coarsening_1999} studied
stabilized some semi-implicit Fourier-spectral methods to the
Cahn-Hilliard equation. The space variables are discretized
by using a Fourier-spectral method whose convergence rate is
exponential in contrast to the second order convergence of a
usual finite-difference method, the time variable is
discretized by using semi-implicit schemes which allow much
larger time step sizes than explicit schemes. Xu and Tang in
\cite{xu_stability_2006} introduced a different stabilized
term to build stable large time-stepping semi-implicit
methods for an epitaxial growth model. He et al
\cite{he_large_2007} proposed similar large time-stepping
methods for the Cahn-Hilliard equation, in which a
stabilized term $A(\phi^{n+1}-\phi^n)$
(resp. $A(\phi^{n+1}-2\phi^n+\phi^{n-1})$) is added to the
nonlinear bulk force for the first order (resp. second
order) scheme. Shen and Yang systematically studied
stabilization schemes to the Allen-Cahn equation and the
Cahn-Hilliard equation in mixed formulation
\cite{shen_numerical_2010}. They got first-order
unconditionally energy stable schemes and second-order
semi-implicit schemes with reasonable stability
conditions. This idea was followed up in
\cite{feng_stabilized_2013} for the stabilized
Crank-Nicolson schemes for phase field models.  In
\cite{wu_stabilized_2014} another second-order time-accurate
schemes for diffuse-interface models, which are of
Crank-Nicolson type with a new convex-concave splitting of
the energy and tumor-growth system. In above mentioned
schemes, when the nonlinear force is treated explicitly, one
can get energy stability with reasonable stabilization constant
by introducing a proper stabilized term and a suitably
truncated nonlinear $\tilde{f}(\phi)$ instead of
$f(\phi)$ such that a uniform Lipschitz
condition 
is satisfied. It is worth to mention that with no truncation
made to double-well potential $F(\phi)$, Li et al \cite{li_characterizing_2016,
  li_second_2017} proved that the energy stable can be
obtained as well, but a much larger stability constant need
be used.

Recently, we proposed two second-order unconditionally
stable linear schemes based on Crank-Nicolson method (SL-CN)
and second-order backward differentiation formula (SL-BDF2)
for the Cahn-Hilliard equation\cite{wang_two_2017}.  In both
schemes, explicit extrapolation is used for the nonlinear
force with two extra stabilization terms which consist to
the order of the schemes added to guarantee energy
dissipation.  The proposed methods have several merits: 1)
They are second order accurate; 2) They lead to linear
systems with constant coefficients after time
discretization, thus robust and efficient solution
procedures are guaranteed; 3) The stability analysis bases
on Galerkin formulation, so both finite element methods and
spectral methods can be used for spatial discretization to
conserve volume fraction and satisfy discretized energy
dissipation law.  An optimal error estimate in
$l^{\infty}(0,T;H^{-1})\cap l^{2}(0,T;H^{1})$ norm is
obtained for the SL-BDF2 scheme in last paper.  This paper
aims to give an optimal error estimate of the SL-CN scheme.

The remain part of the paper is organized as follows. In
Section 2, we present the stabilized linear semi-implicit
Crank-Nicolson scheme for the Cahn-Hilliard equation and its
unconditionally energy stability property. In Section 3, we
carry out the error estimate to derive a convergence result
that does not depend on $1/\varepsilon$ exponentially.  A
few numerical tests for a 2-dimensional square domain are
included in Section 4 to verify our theoretical results. We
end the paper with some concluding remarks in Section 5.

\section{The stabilized linear semi-implicit Crank-Nicolson scheme}

We first introduce some notations which will be used
throughout the paper. We use $\|\cdot\|_{m,p}$ to denote the
standard norm of the Sobolev space $W^{m,p}(\Omega)$. In
particular, we use $\|\cdot\|_{L^p}$ to denote the norm of
$W^{0,p}(\Omega)=L^{p}(\Omega)$; $\|\cdot\|_{{m}}$ to denote
the norm of $W^{m,2}(\Omega)=H^{m}(\Omega)$; and $\|\cdot\|$
to denote the norm of $W^{0,2}(\Omega)=L^{2}(\Omega)$.  Let
$(\cdot, \cdot)$ represent the $L^{2}$ inner product.  In
addition, define for $p\geq 0$
\[
H^{-p}(\Omega):=\left(H^{p}(\Omega)\right)^{*},\quad
H_{0}^{-p}(\Omega):=\left\{ u \in H^{-p}(\Omega)
\mid\,\<u,1\>_{p}=0 \right\},
\]
where $\<\cdot,\cdot\>_{p}$ stands for the dual product
between $H^{p}(\Omega)$ and $H^{-p}(\Omega)$. We denote
$L_{0}^{2}(\Omega):= H_{0}^{0}(\Omega)$. For
$v \in L_{0}^{2}(\Omega)$, let
$-\Delta^{-1}v:=v_{1} \in H^{1}(\Omega)\cap
L_{0}^{2}(\Omega)$, where $v_{1}$ is the solution to
\begin{equation}\nn
-\Delta v_{1}=v \ \ {\rm in}\  \Omega ,\quad \ \frac{\partial v_{1}}{\partial n}=0 \ \ {\rm on}\  \partial \Omega,
\end{equation}
and $\|v\|_{-1}:=\sqrt{(v,-\Delta^{-1}v) }$.

For any given function $\phi(t)$ of $t$, we use $\phi^n$ to
denote an approximation of $\phi(n\tau)$, where $\tau$ is
the step-size. We will frequently use the shorthand
notations: $\delta_{t}\phi^{n+1}:=\phi^{n+1}-\phi^{n}$,
$\delta_{tt}\phi^{n+1}:=\phi^{n+1}-2\phi^{n}+\phi^{n-1}$,
and $\hat{\phi}^{n+\frac{1}{2}}
:=\frac{3}{2}\phi^{n}-\frac{1}{2}\phi^{n-1}$. Following identities and inequality will be used frequently.
\begin{equation}\label{eq:ID:1}
2(h^{n+1}-h^n, h^{n+1}) = \|h^{n+1}\|^2 - \|h^n\|^2 + \|h^{n+1}-h^n\|^2,
\end{equation}
\begin{equation}\label{eq:ID:3}
(u, v) \leq \|u\|_{-1}  \| \nabla v\|, \quad \forall\   u\in L_0^2, v\in H^1.
\end{equation}

Suppose $\phi^0=\phi_0(\cdot)$ and
$\phi^1\approx \phi(\cdot,\tau)$ are given, our stabilized liner Crank-Nicolson scheme (abbr. SL-CN)
calculates $\phi^{n+1}, n=1,2,\ldots,N=T/\tau-1$
iteratively, using
\begin{gather}\label{eq:CN:1}
\frac{\phi^{n+1}-\phi^{n}}{\tau}=\gamma\Delta \mu^{n+\frac{1}{2}},\\
\label{eq:CN:2}
\mu^{n+\frac{1}{2}}=-\varepsilon \Delta \Big(\frac{\phi^{n+1}+\phi^{n}}{2} \Big)
+\frac{1}{\varepsilon}f\Big(\frac{3}{2}\phi^{n}-\frac{1}{2}\phi^{n-1}\Big)
-A\tau \Delta \delta_{t}\phi^{n+1}
+B\delta_{tt}\phi^{n+1},
\end{gather}
where $A$ and $B$ are two non-negative constants to
stabilize the scheme.

To prove energy stability of the numerical schemes, we
assume that the derivative of $f$ in equation \eqref{eq:CH}
is uniformly bounded, i.e.
\begin{equation}\label{eq:Lip}
	\max_{\phi\in\mathbf{R}} | f'(\phi) | \le L,
\end{equation}
where $L$ is a non-negative constant.
Note that, although most of the nonlinear potential, 
e.g. the double-well poential doesn't satisfy \eqref{eq:Lip}, the above assumption is reasonable since: 1) physically $\phi$ should take values in $[-1,1]$; 2) it was proved by Caffarelli and Muler 
\cite{caffarelli_l_1995} that an $L^\infty$ bound exists
for Cahn-Hilliard equation with a potential having linear growth for $|\phi| > 1$, 3) it is proved by \cite{alikakos_convergence_1994} and \cite{feng_numerical_2005} that when a proper initial condition is given, the Cahn-Hilliard equation converges to Hele-Shaw problem when $\varepsilon\rightarrow 0$. If the corresponding Hele-Shaw problem has a global (in time) classical solution, then the solution to the Cahn-Hilliard equation has a $L^\infty$ bound.

\begin{theorem}\label{cn}
	Under the condition
	\begin{equation}\label{eq:CN:ABcond}
	A\geq \dfrac{L^{2}}{16\varepsilon^{2}}\gamma, \quad  
	B\geq \dfrac{L}{2\varepsilon},
	\end{equation}
	the following energy dissipation law
	\begin{equation}\label{eq:CN:Edis}
	E_{CN}^{n+1}\leq E_{CN}^{n}-\Big(2\sqrt{\frac{A}{\gamma}}-\frac{L}{2\varepsilon}\Big)\|\delta_{t}\phi^{n+1}\|^{2}
	-\Big(\frac{B}{2}-\frac{L}{4\varepsilon}\Big)\|\delta_{tt}\phi^{n+1}\|^{2},\hspace{1cm} \forall n\geq1,
	\end{equation}
	holds for the scheme (\ref{eq:CN:1})-(\ref{eq:CN:2}), where
	\begin{equation}\label{eq:CN:E}
	E_{CN}^{n+1}=E(\phi^{n+1})
	+\Big(\frac{L}{4\varepsilon}+\frac{B}{2}\Big)\|\delta_{t}\phi^{n+1}\|^{2}.
	\end{equation}
\end{theorem}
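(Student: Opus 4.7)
The approach is a standard discrete energy estimate exploiting the Crank-Nicolson structure. Throughout I write $\bar\phi^{n+\frac{1}{2}}:=(\phi^{n+1}+\phi^n)/2$ for the midpoint average. First I would take the $L^2$ inner product of \eqref{eq:CN:1} with $\tau\mu^{n+\frac{1}{2}}$ and of \eqref{eq:CN:2} with $\delta_t\phi^{n+1}$, integrate by parts using the Neumann boundary conditions, and eliminate the common quantity $(\mu^{n+\frac{1}{2}},\delta_t\phi^{n+1})$ between the two relations. The diffusion term $-\varepsilon\Delta\bar\phi^{n+\frac{1}{2}}$ then produces $\tfrac{\varepsilon}{2}(\|\nabla\phi^{n+1}\|^2-\|\nabla\phi^{n}\|^2)$, the stabilizer $-A\tau\Delta\delta_t\phi^{n+1}$ produces $A\tau\|\nabla\delta_t\phi^{n+1}\|^2$, identity \eqref{eq:ID:1} turns $B(\delta_{tt}\phi^{n+1},\delta_t\phi^{n+1})$ into $\tfrac{B}{2}(\|\delta_t\phi^{n+1}\|^2-\|\delta_t\phi^{n}\|^2+\|\delta_{tt}\phi^{n+1}\|^2)$, and the mobility equation contributes a dissipation $\gamma\tau\|\nabla\mu^{n+\frac{1}{2}}\|^2$.

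Next I would bound the explicit nonlinear contribution $\varepsilon^{-1}(f(\hat\phi^{n+\frac{1}{2}}),\delta_t\phi^{n+1})$ against the telescoping quantity $\varepsilon^{-1}\int_\Omega(F(\phi^{n+1})-F(\phi^n))\,dx$. The natural split is $f(\hat\phi^{n+\frac{1}{2}})=f(\bar\phi^{n+\frac{1}{2}})+[f(\hat\phi^{n+\frac{1}{2}})-f(\bar\phi^{n+\frac{1}{2}})]$. For the midpoint part, a short Taylor argument using $|f'|\le L$ yields the pointwise estimate $|f(\bar\phi^{n+\frac{1}{2}})\delta_t\phi^{n+1}-(F(\phi^{n+1})-F(\phi^n))|\le\tfrac{L}{4}|\delta_t\phi^{n+1}|^2$. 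For the extrapolation defect I would exploit the key algebraic identity $\hat\phi^{n+\frac{1}{2}}-\bar\phi^{n+\frac{1}{2}}=-\tfrac{1}{2}\delta_{tt}\phi^{n+1}$ together with the Lipschitz bound \eqref{eq:Lip} and a Cauchy-Schwarz/Young step. Taken together these deliver the $L/(2\varepsilon)\|\delta_t\phi^{n+1}\|^2$ and $L/(4\varepsilon)\|\delta_{tt}\phi^{n+1}\|^2$ error contributions whose signs match those appearing on the right of \eqref{eq:CN:Edis}.

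The principal obstacle, and the origin of the $\sqrt{A/\gamma}$ scaling in the dissipation coefficient, is absorbing the residual $\|\delta_t\phi^{n+1}\|^2$ into the combined gradient dissipation $A\tau\|\nabla\delta_t\phi^{n+1}\|^2+\gamma\tau\|\nabla\mu^{n+\frac{1}{2}}\|^2$. Equation \eqref{eq:CN:1} immediately shows that $\delta_t\phi^{n+1}\in L_0^2(\Omega)$ (mass conservation), and a one-step calculation with the inverse Laplacian under Neumann boundary data yields the identity $\|\delta_t\phi^{n+1}\|_{-1}=\gamma\tau\|\nabla\mu^{n+\frac{1}{2}}\|$. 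Combined with \eqref{eq:ID:3} applied to $u=v=\delta_t\phi^{n+1}$ this gives $\|\delta_t\phi^{n+1}\|^2\le\gamma\tau\|\nabla\mu^{n+\frac{1}{2}}\|\,\|\nabla\delta_t\phi^{n+1}\|$, and a weighted AM-GM with ratio $\sqrt{\gamma/A}$ produces the sharp bound $2\sqrt{A/\gamma}\|\delta_t\phi^{n+1}\|^2\le A\tau\|\nabla\delta_t\phi^{n+1}\|^2+\gamma\tau\|\nabla\mu^{n+\frac{1}{2}}\|^2$. The threshold $A\ge\gamma L^2/(16\varepsilon^2)$ is exactly what makes $2\sqrt{A/\gamma}\ge L/(2\varepsilon)$, and $B\ge L/(2\varepsilon)$ makes $B/2\ge L/(4\varepsilon)$, so both coefficients in the target dissipation are nonnegative. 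Moving $(L/(4\varepsilon)+B/2)\|\delta_t\phi^{n+1}\|^2$ from the right into the modified energy then gives $E_{CN}^{n+1}-E_{CN}^{n}$ on the left and collapses the estimate to \eqref{eq:CN:Edis}.
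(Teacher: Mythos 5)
Your overall strategy mirrors the paper's: the same test functions for \eqref{eq:CN:1}--\eqref{eq:CN:2}, identity \eqref{eq:ID:1} for the $B$-term, a Lipschitz/Taylor bound for the explicit nonlinearity, and the absorption $2\sqrt{A/\gamma}\|\delta_t\phi^{n+1}\|^2\le \gamma\tau\|\nabla\mu^{n+\frac12}\|^2+A\tau\|\nabla\delta_t\phi^{n+1}\|^2$ (your derivation via $\|\delta_t\phi^{n+1}\|_{-1}=\gamma\tau\|\nabla\mu^{n+\frac12}\|$ and \eqref{eq:ID:3} is equivalent to the paper's pairing of \eqref{eq:CN:1} with $2\sqrt{A/\gamma}\,\tau\delta_t\phi^{n+1}$). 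The midpoint Taylor bound $\frac{L}{4}|\delta_t\phi^{n+1}|^2$ and the identity $\hat\phi^{n+\frac12}-\bar\phi^{n+\frac12}=-\frac12\delta_{tt}\phi^{n+1}$ are also correct.

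The gap is in the last step. Because you expand around the midpoint $\bar\phi^{n+\frac12}$, your nonlinear error is $\frac{L}{2\varepsilon}\|\delta_t\phi^{n+1}\|^2+\frac{L}{4\varepsilon}\|\delta_{tt}\phi^{n+1}\|^2$, with everything at level $n+1$. Assembling the pieces you therefore obtain the dissipation law for the energy $\tilde E^{n+1}:=E(\phi^{n+1})+\frac{B}{2}\|\delta_t\phi^{n+1}\|^2$, not for $E_{CN}^{n+1}$ as defined in \eqref{eq:CN:E}, which carries the extra $\frac{L}{4\varepsilon}\|\delta_t\phi^{n+1}\|^2$. Your final sentence ("moving $(L/(4\varepsilon)+B/2)\|\delta_t\phi^{n+1}\|^2$ into the modified energy") does not work as stated: adding $\frac{L}{4\varepsilon}\|\delta_t\phi^{n+1}\|^2$ to the left requires a matching $\frac{L}{4\varepsilon}\|\delta_t\phi^{n}\|^2$ to fold into $E_{CN}^{n}$, and your estimate produces no such previous-level term; compensating for it either leaves a spurious positive $\frac{L}{4\varepsilon}\|\delta_t\phi^{n}\|^2$ on the right or degrades the coefficient of $\|\delta_t\phi^{n+1}\|^2$ to $2\sqrt{A/\gamma}-\frac{3L}{4\varepsilon}$, which under \eqref{eq:CN:ABcond} can be negative (at $A=\gamma L^2/16\varepsilon^2$ it equals $-\frac{L}{4\varepsilon}$), so \eqref{eq:CN:Edis} is not recovered. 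The paper avoids this by expanding $F(\phi^{n+1})$ and $F(\phi^{n})$ about the extrapolated value $\hat\phi^{n+\frac12}$, which distributes the error as $\frac{L}{4\varepsilon}\big(\|\delta_t\phi^{n+1}\|^2+\|\delta_{tt}\phi^{n+1}\|^2+\|\delta_t\phi^{n}\|^2\big)$; the level-$n$ piece is exactly what the $\frac{L}{4\varepsilon}$ term in $E_{CN}^{n}$ absorbs, and the telescoping then gives \eqref{eq:CN:Edis} with the stated coefficients. To prove the theorem as written, switch the expansion point (or otherwise split the defect term asymmetrically across levels $n$ and $n+1$); your present argument proves a valid but different energy law.
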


\begin{proof}
	Pairing \eqref{eq:CN:1} with
	$\tau \mu^{n+\frac{1}{2}}$, \eqref{eq:CN:2} with
	$-\delta_t\phi^{n+1}$, and combining the results, we get
	\begin{equation}\label{cn5}
	\begin{split}
	&\frac{\varepsilon}{2}(\|\nabla \phi^{n+1}\|^{2} -
	\|\nabla \phi^{n}\|^{2})
	+\frac{1}{\varepsilon}(f\big(\hat{\phi}^{n+\frac{1}{2}}\big),\delta_t\phi^{n+1} )\\
	=&-\gamma\tau \|\nabla \mu^{n+\frac{1}{2}}\|^{2}-A\tau \|\nabla
	\delta_{t}\phi^{n+1}\|^{2}
	-B(\delta_{tt}\phi^{n+1},\delta_t\phi^{n+1}).
	\end{split}
	\end{equation}
	Pairing (\ref{eq:CN:1}) with
	$2\sqrt{\frac{A}{\gamma}}\tau\delta_t\phi^{n+1}$, then using
	Cauchy-Schwartz inequality, we get
	\begin{equation}\label{cn7}
	2\sqrt{\tfrac{A}{\gamma}}\|\delta_{t} \phi^{n+1}\|^{2}
	= -2\sqrt{A\gamma}\tau(\nabla \mu^{n+\frac12},\nabla\delta_t \phi^{n+1})
	\leq \gamma\tau \|\nabla \mu^{n+\frac12}\|^{2} +
	A\tau\|\nabla \delta_{t}\phi^{n+1}\|^{2}.
	\end{equation}
	To handle the term involving $f$,  we expand $F(\phi^{n+1})$ and $F(\phi^n)$ at
	$\hat{\phi}^{n+\frac{1}{2}}$ as
	\begin{align*}
	F(\phi^{n+1}) &= F(\hat{\phi}^{n+\frac{1}{2}})+f(\hat{\phi}^{n+\frac{1}{2}})(\phi^{n+1}-\hat{\phi}^{n+\frac{1}{2}})+\frac{1}{2}f'(\xi^{n}_{1})(\phi^{n+1}-\hat{\phi}^{n+\frac{1}{2}})^{2},\\
	F(\phi^{n}) &= F(\hat{\phi}^{n+\frac{1}{2}})+f(\hat{\phi}^{n+\frac{1}{2}})(\phi^{n}-\hat{\phi}^{n+\frac{1}{2}})+\frac{1}{2}f'(\xi^{n}_{2})(\phi^{n}-\hat{\phi}^{n+\frac{1}{2}})^{2},
	\end{align*}
	where $\xi^{n}_1$ is a number between $\phi^{n+1}$ and
    $\hat{\phi}^{n+\frac12}$, $\xi^{n}_2$ is a number
    between $\phi^n$ and $\hat{\phi}^{n+\frac12}$.  Taking
    the difference of above two equations, we have
	\begin{equation}\nonumber
	\begin{split}
	&  F(\phi^{n+1})-F(\phi^{n}) - f(\hat{\phi}^{n+\frac{1}{2}})(\phi^{n+1}-\phi^{n})\\
	={} &  \frac{1}{2}f'(\xi^{n}_{1})
	\left[(\phi^{n+1}-\hat{\phi}^{n+\frac{1}{2}})^{2} - (\phi^{n}-\hat{\phi}^{n+\frac{1}{2}})^{2}
	\right]
	- \frac{1}{2}(f'(\xi^{n}_{2})-f'(\xi^{n}_{1}))(\phi^{n}-\hat{\phi}^{n+\frac{1}{2}})^{2}\\
	={} &  
	\frac{1}{2}f'(\xi^{n}_{1})\delta_{t}\phi^{n+1}\delta_{tt}\phi^{n+1}
	- \frac{1}{8}(f'(\xi^{n}_{2})-f'(\xi^{n}_{1}))(\delta_{t}\phi^{n})^{2}\\
	\le{} & 
	\frac{L}{4}(|\delta_t \phi^{n+1}|^2 + |\delta_{tt}\phi^{n+1}|^2)
	+ \frac{L}{4}|\delta_t\phi^n|^2.
	\end{split}
	\end{equation}
	Multiplying the above equation with $\dfrac{1}{\varepsilon}$, then taking integration leads to
	\begin{equation}\label{eq:cn2}
	\frac{1}{\varepsilon}(F(\phi^{n+1})-F(\phi^{n})-f(\hat{\phi}^{n+\frac{1}{2}})\delta_t\phi^{n+1}, 1) 
	\le
	\frac{L}{4\varepsilon}(\|\delta_t \phi^{n+1}\|^2 + \|\delta_{tt}\phi^{n+1}\|^2
	+ \|\delta_t\phi^n\|^2).
	\end{equation}
	For the term involving $B$, by using identity
	\eqref{eq:ID:1} with $h^{n+1}=\delta_t \phi^{n+1}$, one get
	\begin{equation}\label{cs11-0}
	-B(\delta_{tt}\phi^{n+1},\delta_t\phi^{n+1})
	=-\frac{B}{2}\|\delta_{t}\phi^{n+1}\|^{2}+\frac{B}{2}\|\delta_{t}\phi^{n}\|^{2}
	-\frac{B}{2}\|\delta_{tt}\phi^{n+1}\|^{2}.
	\end{equation}
	Summing up \eqref{cn5}-\eqref{cs11-0}, we obtain
	\begin{equation}\label{cn8}
	\begin{split}
	&\frac{\varepsilon}{2}(\|\nabla \phi^{n+1}\|^{2} -
	\|\nabla \phi^{n}\|^{2}) 
	+ \frac{1}{\varepsilon}(F(\phi^{n+1})-F(\phi^{n}),1)
	+
	\frac{B}{2}\|\delta_{t}\phi^{n+1}\|^{2} -\frac{B}{2}\|\delta_{t}\phi^{n}\|^{2}\\
	\leq & -
	2\sqrt{\frac{A}{\gamma}}\|\delta_{t}\phi^{n+1}\|^{2}
	+\frac{L}{4\varepsilon}\|\delta_{t} \phi^{n+1}\|^{2}
	+\frac{L}{4\varepsilon}\|\delta_{t} \phi^{n}\|^{2}
	-\frac{B}{2}\|\delta_{tt}\phi^{n+1}\|^{2}
	+\frac{L}{4\varepsilon}\|\delta_{tt} \phi^{n+1}\|^{2},
	\end{split}
	\end{equation}
	which is the energy estimate \eqref{eq:CN:Edis}.
\end{proof}

\begin{remark}\label{rem:stab1}
  Note that, if $B=0$, we can take
  $A\geq \dfrac{L^{2}\gamma}{4\varepsilon^{2}}$ to make the
  SL-CN scheme (\ref{eq:CN:1})-(\ref{eq:CN:2})
  unconditionally stable as well.  However, when $A=0$, we
  can't prove an unconditional stability for
  $B\sim O(\varepsilon^{-1})$ or
  $B\sim O(\varepsilon^{-2})$.
\end{remark}

\begin{remark}\label{rmk:stab2}
  The constant $A$ defined in equation \eqref{eq:CN:ABcond}
  seems to be quite large when $\varepsilon$ is small, but
  it is not necessarily true. Since usually $\gamma$ is a
  small constant related to $\varepsilon$. For example, it
  was pointed out in \cite{magaletti2013sharp} that, the
  Cahn-Hilliard equation coupled with the Navier-Stokes
  equations have a sharp-interface limit when
  $O(\varepsilon^3) \le \gamma \le O(\varepsilon)$, while
  $\gamma \sim O(\varepsilon^2)$ gives the fastest
  convergence. On the other hand, the numerical results in
  Section 4 shows that in practice $A$ can take much smaller
  values than those defined in \eqref{eq:CN:ABcond} when
  nonzero $B$ values are used.
\end{remark}

\begin{remark}\label{rmk:stab3}
  The discrete Energy $E_C$ defined in equation
  \eqref{eq:CN:E} is a first order approximation to the
  original energy $E$, since
  $ \| \delta_t \phi^{n+1} \|^2 \sim O(\tau^2)$.  On the
  other side, summing up the equation \eqref{eq:CN:Edis} for
  $n=1,\ldots, N$, we get
	\begin{equation}\label{eq:steady}
	E^{N+1}_{CN} + \sum_{n=1}^N \left( 
	\Big(2\sqrt{\frac{A}{\gamma}}-\frac{L}{2\varepsilon}\Big)\|\delta_{t}\phi^{n+1}\|^{2}
	-\Big(\frac{B}{2}-\frac{L}{4\varepsilon}\Big)\|\delta_{tt}\phi^{n+1}\|^{2}
	\right) \leq E^1_{CN}.
	\end{equation}
	By taking $N\rightarrow\infty$, we get
    $\delta_t \phi^{n+1} \rightarrow 0$, which means the
    system will eventually converge to a steady state. By
    equation \eqref{eq:CN:1} and \eqref{eq:CN:2}, this
    steady state is a critical point of the original energy
    functional $E$.
\end{remark}
\section{Convergence analysis}

In this section, we shall establish error estimate of
the SL-CN scheme. We will shown that, if the interface is well
developed in the initial condition, the error bounds depend
on $\frac{1}{\varepsilon}$ only in some lower polynomial
order for small $\varepsilon$.  Let $\phi(t^{n})$ be the
exact solution at time $t=t^{n}$ to equation of
(\ref{eq:CH}) and $\phi^{n}$ be the solution to the time discrete numerical scheme
(\ref{eq:CN:1})-(\ref{eq:CN:2}), we define error function
$e^{n}:=\phi^{n}-\phi(t^{n})$. Obviously $e^{0}=0$.

Before presenting the detailed error analysis, we first make
some assumptions. For simplicity, we take $\gamma=1$ in this
section, and assume $0<\varepsilon<1$.  We use notation
$\lesssim$ in the way that $f\lesssim g$ means that
$f \le C g$ with positive constant $C$ independent of $\tau$
and $\varepsilon$.
\begin{ap}\label{ap:1}
	We make following assumptions on $f$:
	\begin{enumerate}
		\item [(1)]$F\in C^{4}(\mathbf{R})$, 
		$F(\pm 1)=0$, and $F>0$ elsewhere. There exist two
		non-negative constants $B_0,B_1$, such that
		\begin{equation}\label{eq:AP:Fcoercive} \phi^2 \le
		B_0 + B_1 F(\phi),\quad \forall\; \phi\in\mathbf{R}.
		\end{equation}
		\item[(2)] $f=F'$. $f'$ and $f''$ are uniformly bounded, or,  $f$
		satisfies \eqref{eq:Lip} and
		\begin{equation}\label{eq:Lip2}
		\max_{\phi\in\mathbf{R}} | f''(\phi) | \le L_2,
		\end{equation}
		where $L_2$ is a non-negative constant.
	\end{enumerate}
\end{ap}

\begin{ap}\label{ap:02}
We assume that there exist positive constants $m_{0}$ and non-negative constants $\sigma_{1},\sigma_{2},\sigma_{3}$ such that
\begin{equation}\label{ap:021}
  m_{0}:=\frac{1}{|\Omega|}\int_{\Omega}\phi^{0}(x){\rm d}x \in (-1,1),
\end{equation}
\begin{equation}\label{ap:022}
 E(\phi^{0}):=\frac{\varepsilon}{2}\|\nabla \phi^{0}\|^{2}+\frac{1}{\varepsilon}\|F(\phi^{0})\|_{L^{1}}\lesssim \varepsilon^{-2\sigma_{1}}.
\end{equation}
\begin{equation}\label{ap:022-1}
 \|\mu^{0}\|_{H^{l}}:=\|-\varepsilon \Delta \phi^{0}+ \frac{1}{\varepsilon}f(\phi^{0})\|_{H^{l}}
 \lesssim \varepsilon^{-2\sigma_{2+l}}, \ l=0,1.
\end{equation}
We also assume that an appropriate scheme is used to
calculate the numerical solution at first step, such
that
\begin{equation}\label{eq:AP:phi1}
  m_1:=\frac{1}{|\Omega|}\int_{\Omega}\phi^{1}(x){\rm d}x = m_0, \qquad
  E_\varepsilon (\phi^1) \le E_\varepsilon (\phi^0) \lesssim\varepsilon^{-2\sigma_{1}},
  \end{equation}
\begin{equation} \label{eq:AP:phi1mu2}
\|\delta_{t}\phi^{1}\|^{2} \lesssim \varepsilon^{-2 \sigma_{1}},
\end{equation}
 then
\begin{equation}\label{ap:0220}
  E_{CN}^{1}\lesssim \varepsilon^{-2 \sigma_{1}}+  \varepsilon^{-2\sigma_{1}-1},
\end{equation}
and exist a constant $\sigma_{0}>0$,
\begin{equation}\label{ap:022-2}
  \|e^{1}\|_{-1}^{2}+\|e^{1}\|^{2}+\varepsilon \|\nabla e^{1}\|^{2} \lesssim \varepsilon^{-\sigma_{0}}\tau^{4}.
\end{equation}
\end{ap}

\begin{lm}\label{lm:02}
  Suppose that $f$ satisfies Assumption \ref{ap:1},
  $\phi_{0}\in H^{2}(\Omega)$. Then,
  the following estimates holds for the numerical solution
  of \eqref{eq:CN:1}-\eqref{eq:CN:2}
  \begin{equation}\label{eq:CA:conserv}
	\frac{1}{|\Omega|}\int_{\Omega}\phi^{n}(x){\rm d}x=m_0,\quad n=1,\ldots, N+1,
  \end{equation}
\begin{equation}\label{eq:lm:02}
  E_{CN}^{n+1} \leq E_{CN}^{1} \lesssim \varepsilon^{-2 \sigma_{1}-1}.
\end{equation}
\end{lm}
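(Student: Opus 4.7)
The lemma has two claims: the discrete mass conservation \eqref{eq:CA:conserv} and the uniform energy bound \eqref{eq:lm:02}. Both follow almost directly from previously established material; the plan is essentially to (i) integrate the scheme to propagate the mean, and (ii) iterate the energy dissipation law from Theorem \ref{cn} and bound the single quantity $E_{CN}^1$ by Assumption \ref{ap:02}.

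\textbf{Mass conservation.} First I would test \eqref{eq:CN:1} against the constant $1$:
\begin{equation*}
\int_{\Omega}\frac{\phi^{n+1}-\phi^{n}}{\tau}\,dx \;=\; \gamma\int_{\Omega}\Delta\mu^{n+\frac12}\,dx \;=\; \gamma\int_{\partial\Omega}\partial_n\mu^{n+\frac12}\,dS \;=\; 0,
\end{equation*}
where the last equality uses the Neumann boundary condition on $\mu^{n+1/2}$ built into the scheme (it is inherited from \eqref{eq:CH:nbc} and is implicit in the variational formulation of \eqref{eq:CN:1}--\eqref{eq:CN:2}). Hence $\int_\Omega \phi^{n+1}\,dx = \int_\Omega \phi^{n}\,dx$ for every $n\ge 1$. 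Combined with the initialization \eqref{eq:AP:phi1}, which gives $\int_\Omega \phi^1\,dx = m_0|\Omega|$, induction on $n$ yields \eqref{eq:CA:conserv}.

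\textbf{Monotone energy decay.} Next I would verify that the stabilization constants chosen in \eqref{eq:CN:ABcond} make the right-hand side of \eqref{eq:CN:Edis} non-positive. Indeed $A\ge L^2\gamma/(16\varepsilon^2)$ gives $2\sqrt{A/\gamma}\ge L/(2\varepsilon)$, so the coefficient in front of $\|\delta_t\phi^{n+1}\|^2$ is nonnegative; and $B\ge L/(2\varepsilon)$ makes $B/2-L/(4\varepsilon)\ge 0$. Plugging this into Theorem \ref{cn} gives $E_{CN}^{n+1}\le E_{CN}^{n}$ for all $n\ge 1$, and telescoping yields $E_{CN}^{n+1}\le E_{CN}^{1}$.

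\textbf{Bounding $E_{CN}^1$.} It only remains to show $E_{CN}^{1}\lesssim \varepsilon^{-2\sigma_1-1}$, which is exactly \eqref{ap:0220}. Starting from the definition \eqref{eq:CN:E} at $n=0$,
\begin{equation*}
E_{CN}^{1} \;=\; E(\phi^{1}) + \Bigl(\tfrac{L}{4\varepsilon}+\tfrac{B}{2}\Bigr)\,\|\delta_t\phi^{1}\|^2,
\end{equation*}
I would use \eqref{eq:AP:phi1} to bound $E(\phi^1)\le E(\phi^0)\lesssim \varepsilon^{-2\sigma_1}$, choose $B\sim L/\varepsilon$ consistent with \eqref{eq:CN:ABcond} so that $L/(4\varepsilon)+B/2\lesssim \varepsilon^{-1}$, and finally invoke \eqref{eq:AP:phi1mu2} to obtain $\|\delta_t\phi^{1}\|^2\lesssim \varepsilon^{-2\sigma_1}$. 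Multiplying these factors gives
\begin{equation*}
E_{CN}^{1}\;\lesssim\; \varepsilon^{-2\sigma_1}+\varepsilon^{-1}\cdot \varepsilon^{-2\sigma_1}\;\lesssim\; \varepsilon^{-2\sigma_1-1},
\end{equation*}
which closes the argument. There is really no serious obstacle here: the monotonicity step is a direct corollary of Theorem \ref{cn}, and the initialization bound is packaged verbatim into Assumption \ref{ap:02}. The only point that requires a little care is making explicit that the discrete Neumann boundary on $\mu^{n+1/2}$ used in the mass-conservation step is the one enforced by the (weak or strong) formulation of the scheme.
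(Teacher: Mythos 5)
Your proposal is correct and follows essentially the same route as the paper, which simply integrates \eqref{eq:CN:1} for mass conservation and combines the dissipation law \eqref{eq:CN:Edis} of Theorem \ref{cn} with the bound \eqref{ap:0220} for the energy estimate. The only difference is that you spell out details the paper leaves implicit, namely the sign check on the coefficients in \eqref{eq:CN:Edis} under \eqref{eq:CN:ABcond} and the derivation of \eqref{ap:0220} from \eqref{eq:AP:phi1}--\eqref{eq:AP:phi1mu2} with $B\sim L/\varepsilon$, which the paper packages directly into Assumption \ref{ap:02}.
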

\begin{proof} (i) Equation \eqref{eq:CA:conserv} is obtained
  by integrating equation \eqref{eq:CN:1}.
	
  (ii) Equation \eqref{eq:lm:02}
  is a direct result of the energy estimate
  \eqref{eq:CN:Edis} and \eqref{ap:0220}.
\end{proof}

\indent Some regularities of exact solution $\phi(t)$ are necessary for the error estimates.
\begin{ap}\label{ap:03}
Suppose the exact solution of (\ref{eq:CH}) have the following regularities:\\
\begin{enumerate}
  \item[(1)] $\Delta^{-1} \phi(t) \in W^{2,2}(0,\infty;H^{-1})$, or
  $$\int_{0}^{\infty}\|\partial_{tt}\Delta^{-1}\phi(t)\|_{-1}^{2}{\rm d}t \leq \varepsilon^{-\rho_{1}},$$
  \item[(2)] $\phi(t) \in W^{2,2}(0,\infty;H^{-1}\bigcap H^{3})$, or
  \begin{align*}
 \int_{0}^{\infty}\|\partial_{tt}\phi(t)\|_{-1}^{2}{\rm d}t &\leq \varepsilon^{-\rho_{2}},\quad&
 \int_{0}^{\infty}\|\partial_{tt} \phi(t)\|^{2}{\rm d}t &\leq \varepsilon^{-\rho_{3}},& \\
 \int_{0}^{\infty}\|\partial_{tt} \nabla \phi(t)\|^{2}{\rm d}t &\leq \varepsilon^{-\rho_{4}},\quad&
 \int_{0}^{\infty}\|\partial_{tt}\nabla \Delta \phi(t)\|^{2}{\rm d}t &\leq \varepsilon^{-\rho_{5}},&
  \end{align*}
 \item[(3)] $\phi(t) \in W^{1,2}(0,\infty;H^{3})$, or
   $$\int_{0}^{\infty}\|\partial_{t}\nabla \phi(t)\|^{2}{\rm d}t \leq \varepsilon^{-\rho_{6}},\quad
  \int_{0}^{\infty}\|\partial_{t}\nabla \Delta \phi(t)\|^{2}{\rm d}t \leq \varepsilon^{-\rho_{7}},\quad
  \int_{0}^{\infty}\|\partial_{t}\phi(t)\|^{2}{\rm d}t \leq \varepsilon^{-\rho_{8}}.$$
\end{enumerate}
Here $\rho_{j}, j=1,2,3,4,5,6,7,8$ are non-negative constants which depend on $\sigma_{1}, \sigma_{2}, \sigma_{3}.$
\end{ap}

We first carry out a coarse error estimate using a
standard approach for time semi-discretized schemes.
\begin{prop}(Coarse error estimate)\label{prio}
Suppose that $A, B$ are any non-negative number. Then for all $N\geq1$, we have estimate
\begin{equation}\label{cet2-3}
  \begin{split}
   &\|e^{N+1}\|_{-1}^{2}
   +\frac{\varepsilon \tau}{4}\|\nabla \frac{e^{N+1}+e^{N}}{2}\|^{2}
   +A\tau^{2}\|\nabla e^{N+1}\|^{2}
   +B\tau\|e^{N+1}\|^{2}\\
   \lesssim &
   \varepsilon^{-\max\{\rho_{1}+1, \rho_{2}+3, \rho_{4}-1, \rho_{6}+5\}} \tau^{4} + A\tau^{2}\|\nabla e^{N}\|^{2}
  +B\tau\|e^{N}\|^{2}\\
  &+\Big{(}1+\frac{4B^{2}\tau}{\varepsilon}+\frac{9L^{2}\tau}{\varepsilon^{3}}\Big{)}\|e^{N}\|_{-1}^{2}
  +\Big{(}\frac{4B^{2}\tau}{\varepsilon}+\frac{L^{2}\tau}{\varepsilon^{3}}\Big{)}\|e^{N-1}\|^{2}_{-1},\quad \forall\ \tau>0.\\
   \end{split}
 \end{equation}
 and
 \begin{equation}\label{cet5-1}
  \begin{split}
   &\max_{1\leq n\leq N}\Big{(}\|e^{n+1}\|_{-1}^{2}
   +A\tau^{2}\|\nabla e^{n+1}\|^{2}
   +B\tau\|e^{n+1}\|^{2}\Big{)}
   +\frac{\varepsilon \tau}{4}\sum_{n=1}^{N}\|\nabla \frac{e^{n+1}+e^{n}}{2}\|^{2}\\
  \lesssim &
  \mathrm{exp } \Big{(}\frac{16B^{2}T}{\varepsilon}+\frac{20L^{2}T}{\varepsilon^{3}}\Big{)}
   \varepsilon^{-\max\{\rho_{1}+1, \rho_{2}+3, \rho_{4}-1, \rho_{6}+5, \sigma_{0}+3\}}\tau^{4},\quad\forall\ \tau<1.
   \end{split}
 \end{equation}
 The index $\sigma_0+3$ in \eqref{cet5-1} can be replaced with $\sigma_0$ if we take $\tau<\varepsilon^{1.5}$.
\end{prop}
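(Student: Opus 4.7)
The plan is to set up error equations from the scheme together with the PDE satisfied by the exact solution, perform a weighted $H^{-1}$-type energy pairing, carefully control the nonlinear and stabilization contributions using the Lipschitz bound and inequality \eqref{eq:ID:3}, and finally invoke a discrete Gronwall argument to pass from the per-step bound \eqref{cet2-3} to the cumulative bound \eqref{cet5-1}.

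First I would derive the error equations. Let $\eta^{n+1/2}:=\mu^{n+1/2}-\mu(t^{n+1/2})$ with $\mu(t)=-\varepsilon\Delta\phi(t)+\varepsilon^{-1}f(\phi(t))$. Subtracting the exact PDE from \eqref{eq:CN:1}-\eqref{eq:CN:2} and Taylor-expanding about $t^{n+1/2}$ yields $\tfrac{e^{n+1}-e^n}{\tau}=\Delta\eta^{n+1/2}+R_1^n$ and $\eta^{n+1/2}=-\varepsilon\Delta\tfrac{e^{n+1}+e^n}{2}+\tfrac1\varepsilon[f(\hat\phi^{n+1/2})-f(\phi(t^{n+1/2}))]-A\tau\Delta\delta_t\phi^{n+1}+B\delta_{tt}\phi^{n+1}+R_2^n$, where $R_1^n,R_2^n$ are the standard Crank-Nicolson remainders involving $\partial_{tt}\Delta^{-1}\phi$ and $\partial_{tt}\phi$ integrated over $[t^n,t^{n+1}]$, which Assumption \ref{ap:03} controls by $\varepsilon^{-\rho_j}$. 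Lemma \ref{lm:02} ensures $e^n\in L_0^2$, so $\|\cdot\|_{-1}$ and $-\Delta^{-1}$ act cleanly on the errors.

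Next I would pair the first error equation with $\tau(-\Delta^{-1})\tfrac{e^{n+1}+e^n}{2}$ in the $L^2$ inner product. Integration by parts and identity \eqref{eq:ID:1} applied in the $\|\cdot\|_{-1}$ norm collapse the LHS to $\tfrac12(\|e^{n+1}\|_{-1}^2-\|e^n\|_{-1}^2)$. Substituting $\eta^{n+1/2}$ produces: (i) $\varepsilon\tau\|\nabla\tfrac{e^{n+1}+e^n}{2}\|^2$ on the LHS from the $\Delta$ term; (ii) after splitting $\delta_t\phi^{n+1}=\delta_t e^{n+1}+\delta_t\phi(t^{n+1})$, the identity $A\tau^2(\nabla\delta_t e^{n+1},\nabla\tfrac{e^{n+1}+e^n}{2})=\tfrac{A\tau^2}{2}(\|\nabla e^{n+1}\|^2-\|\nabla e^n\|^2)$ on the LHS plus a truncation piece; (iii) the $B$-term $-\tau B(\delta_{tt}\phi^{n+1},\tfrac{e^{n+1}+e^n}{2})$, split into error and exact parts; (iv) the nonlinear $f$-difference; (v) $R_1^n,R_2^n$. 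For the $f$ term, write $f(\hat\phi^{n+1/2})-f(\phi(t^{n+1/2}))=f'(\xi)[\tfrac32 e^n-\tfrac12 e^{n-1}+O(\tau^2)]$ by the mean-value theorem and the extrapolation Taylor remainder; then \eqref{eq:Lip} combined with \eqref{eq:ID:3} and Young's inequality absorbs a small fraction of $\varepsilon\tau\|\nabla\tfrac{e^{n+1}+e^n}{2}\|^2$ into the LHS and leaves $\tfrac{9L^2\tau}{\varepsilon^3}\|e^n\|_{-1}^2+\tfrac{L^2\tau}{\varepsilon^3}\|e^{n-1}\|_{-1}^2$ on the RHS (the factor $9$ reflecting the $(3/2)^2$ weight in the extrapolation). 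For the $B$-term error piece, since $\delta_{tt}e^{n+1}\in L_0^2$, apply \eqref{eq:ID:3} with the triangle inequality to obtain $|(\delta_{tt}e^{n+1},\tfrac{e^{n+1}+e^n}{2})|\le(\|e^{n+1}\|_{-1}+2\|e^n\|_{-1}+\|e^{n-1}\|_{-1})\|\nabla\tfrac{e^{n+1}+e^n}{2}\|$, and Young yields the $\tfrac{4B^2\tau}{\varepsilon}$-weighted $\|\cdot\|_{-1}^2$ contributions at three time levels; the $\|e^{n+1}\|_{-1}^2$ contribution is absorbed via $\|v\|_{-1}\lesssim\|v\|$ against the $B\tau\|e^{N+1}\|^2$ slot on the LHS (which itself arises as the $L^2$ companion of the same bound). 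All truncation contributions (from $R_1^n,R_2^n$, the extrapolation $O(\tau^2)$ remainder, and the exact pieces $\delta_t\phi(t^{n+1})$, $\delta_{tt}\phi(t^{n+1})$ in the $A$- and $B$-terms) are collected via Cauchy-Schwarz/Young and bounded by $\varepsilon^{-\max\{\rho_1+1,\rho_2+3,\rho_4-1,\rho_6+5\}}\tau^4$ using Assumption \ref{ap:03}.

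The main obstacle is precisely the $B$-stabilization: $\delta_{tt}e^{n+1}$ spans three time levels while being paired with an $L^2$ test function, and a naive bound would place $\|e^{n+1}\|_{-1}^2$ with a large constant on the RHS that cannot be absorbed. Using \eqref{eq:ID:3} to route one factor into $\|\nabla\cdot\|$ (absorbed by the $\varepsilon\tau\|\nabla\cdot\|^2$ buffer) while keeping the three-level sum in the weaker $H^{-1}$ norm gives exactly the $4B^2\tau/\varepsilon$ coefficient appearing in \eqref{cet2-3}. Once \eqref{cet2-3} is proved, \eqref{cet5-1} follows by applying the discrete Gronwall inequality over $n=1,\ldots,N$: the coefficient $1+\tau(4B^2/\varepsilon+9L^2/\varepsilon^3)$ multiplying $\|e^N\|_{-1}^2$ (together with a two-step-lag adjustment for $\|e^{N-1}\|_{-1}^2$, yielding the factor-$4$/factor-$20$ inflation in the exponent) integrates to $\exp(T(16B^2/\varepsilon+20L^2/\varepsilon^3))$, and the initial bound $\|e^1\|_{-1}^2\lesssim\varepsilon^{-\sigma_0}\tau^4$ from Assumption \ref{ap:02} supplies the base; the extra $\varepsilon^{-3}$ appearing in the $\max$ exponent of \eqref{cet5-1} compared to the $\sigma_0$ option reflects additional absorbed factors that vanish under $\tau<\varepsilon^{3/2}$.
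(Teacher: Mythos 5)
Your overall route coincides with the paper's: subtract the exact equations to get the two error relations, pair them with $-\Delta^{-1}\frac{e^{n+1}+e^n}{2}$ and $-\frac{e^{n+1}+e^n}{2}$, estimate the consistency residuals through Assumption \ref{ap:03}, treat the nonlinear difference by the Lipschitz bound plus $H^{-1}$--$H^1$ duality, and close with discrete Gronwall. The handling of the $A$-term (splitting $\delta_t\phi^{n+1}=\delta_t e^{n+1}+\delta_t\phi(t^{n+1})$ and using the exact identity to place $\frac{A\tau}{2}(\|\nabla e^{n+1}\|^2-\|\nabla e^n\|^2)$ on the left) matches the paper, as do the $9L^2/\varepsilon^3$ bookkeeping for the extrapolation and the Gronwall step leading to \eqref{cet5-1}.

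However, your treatment of the $B$-stabilization term contains a genuine gap. You bound the whole pairing $|(\delta_{tt}e^{n+1},\frac{e^{n+1}+e^n}{2})|$ by duality at all three time levels and then claim that the resulting $\frac{B^2\tau}{\varepsilon}\|e^{n+1}\|_{-1}^2$ contribution can be ``absorbed via $\|v\|_{-1}\lesssim\|v\|$ against the $B\tau\|e^{N+1}\|^2$ slot on the LHS (which itself arises as the $L^2$ companion of the same bound).'' This is circular: if you estimate the full $\delta_{tt}$ term by Cauchy--Schwarz you never generate a positive $B\tau\|e^{n+1}\|^2$ term on the left, so there is nothing to absorb into; and the absorption would anyway go the wrong way, since $\frac{4B^2\tau}{\varepsilon}\|e^{n+1}\|_{-1}^2\le CB\tau\|e^{n+1}\|^2$ needs $B\lesssim\varepsilon$, which fails for the relevant $B\sim L/(2\varepsilon)$ and is incompatible with the claim that \eqref{cet2-3} holds for arbitrary non-negative $B$ and all $\tau>0$ (otherwise one is forced into a step restriction $\tau\lesssim\varepsilon/B^2$). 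The paper avoids this entirely by the same device you already used for the $A$-term: write $\delta_{tt}e^{n+1}=\delta_t e^{n+1}-\delta_t e^{n}$, so that
\begin{equation*}
-B\Big(\delta_t e^{n+1},\tfrac{e^{n+1}+e^{n}}{2}\Big)=-\tfrac{B}{2}\big(\|e^{n+1}\|^{2}-\|e^{n}\|^{2}\big),
\end{equation*}
which produces exactly the $B\tau\|e^{n+1}\|^2$ (left) and $B\tau\|e^{n}\|^2$ (right) terms of \eqref{cet2-3}, while only the lagged piece $B(\delta_t e^{n},\frac{e^{n+1}+e^{n}}{2})$ is estimated by $\|\delta_t e^{n}\|_{-1}\,\|\nabla\frac{e^{n+1}+e^{n}}{2}\|$, yielding the $\frac{4B^{2}\tau}{\varepsilon}$-weighted $\|e^{n}\|_{-1}^2$, $\|e^{n-1}\|_{-1}^2$ terms at old time levels only. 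Replacing your absorption argument by this identity repairs the proof; the rest of your outline then goes through as in the paper.
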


  \begin{proof}
  The following equations for the error function hold:
  \begin{equation}\label{cet}
 \frac{e^{n+1}- e^{n}}{\tau}
 =\Delta(\mu^{n+\frac{1}{2}}-\mu(t^{n+\frac{1}{2}}))
 +\Big{(}\phi^{n+\frac{1}{2}}_{t}-\frac{\phi(t^{n+1})-\phi(t^{n})}{\tau}\Big{)},
 \end{equation}
\begin{equation}\label{cet1}
\begin{split}
 \mu^{n+\frac{1}{2}}-\mu(t^{n+\frac{1}{2}})=
&-\varepsilon\Delta \Big{(} \frac{\phi^{n+1}+\phi^{n}}{2}-\phi(t^{n+\frac{1}{2}}) \Big{)} 
+\frac{1}{\varepsilon}\Big{(}f(\frac{3}{2}\phi^{n}-\frac{1}{2}\phi^{n-1})-f(\phi(t^{n+\frac{1}{2}}))\Big{)}\\
 &-A\tau\Delta \delta_{t}\phi^{n+1}
 +B\delta_{tt}\phi^{n+1}.
 \end{split}
\end{equation}
Pairing (\ref{cet}) with $-\Delta^{-1}\big{(} \frac{e^{n+1}+e^{n}}{2}\big{)}$, adding (\ref{cet1}) paired with $-\big{(} \frac{e^{n+1}+e^{n}}{2}\big{)}$, we get
\begin{equation}\label{cet2}
  \begin{split}
   &\frac{1}{2\tau }(\|e^{n+1}\|_{-1}^{2}-\|e^{n}\|_{-1}^{2})
   +\varepsilon\|\nabla \frac{e^{n+1}+e^{n}}{2}\|^{2}
   +\frac{A\tau}{2}(\|\nabla e^{n+1}\|^{2}-\|\nabla e^{n}\|^{2})\\
   =&-\Big{(}R_{1}^{n+1},\Delta ^{-1 }\frac{e^{n+1}+e^{n}}{2}\Big{)}
   +A\Big{(}\Delta R_{2}^{n+1},\frac{e^{n+1}+e^{n}}{2}\Big{)}
   -B \Big{(}R_{3}^{n+1},\frac{e^{n+1}+e^{n}}{2}\Big{)}\\
   &+\varepsilon \Big{(} \Delta R_{4}^{n+1},\frac{e^{n+1}+e^{n}}{2}\Big{)}
   -B\Big{(}\delta_{tt}e^{n+1},\frac{e^{n+1}+e^{n}}{2}\Big{)}\\
   &-\frac{1}{\varepsilon}\Big{(}f(\frac{3}{2}\phi^{n}-\frac{1}{2}\phi^{n-1})-f(\phi(t^{n+\frac{1}{2}})),\frac{e^{n+1}+e^{n}}{2}\Big{)}\\
  =&:J_{1}+J_{2}+J_{3}+J_{4}+J_{5}+J_{6}=:J,\\
   \end{split}
 \end{equation}
where
\begin{equation}\label{e3}
  R_{1}^{n+1}=\phi_{t}^{n+\frac{1}{2}}-\frac{\phi(t^{n+1})-\phi(t^{n}))}{\tau},
\end{equation}
\begin{equation}\label{e5}
  R_{2}^{n+1}=\tau(\phi(t^{n+1})-\phi(t^{n})),
\end{equation}
\begin{equation} \label{e4}
{R_{3}^{n+1}=}
 \phi(t^{n+1})-2\phi(t^{n})+\phi(t^{n-1}),
\end{equation}
\begin{equation} \label{e4-4}
{R_{4}^{n+1}=}
 \frac{\phi(t^{n+1})+\phi(t^{n})}{2}-\phi(t^{n+\frac{1}{2}}).
\end{equation}
For the right hand of (\ref{cet2}), by using Cauchy-Schwartz inequality, we obtain the following estimate:
\begin{equation}\label{ej1}
J_{1}=-\Big{(}R_{1}^{n+1},\Delta ^{-1 }\frac{e^{n+1}+e^{n}}{2}\Big{)}
\leq \frac{1}{\eta}\|\Delta^{-1}R_{1}^{n+1}\|_{-1}^{2}+\frac{\eta}{4}\|\nabla \frac{e^{n+1}+e^{n}}{2}\|^{2},
\end{equation}

\begin{equation}\label{ej2}
J_{2}=A\Big{(}\Delta R_{2}^{n+1},\frac{e^{n+1}+e^{n}}{2}\Big{)}
\leq \frac{A^{2}}{\eta}\|\nabla R_{2}^{n+1}\|^{2}+\frac{\eta}{4}\|\nabla \frac{e^{n+1}+e^{n}}{2}\|^{2},
\end{equation}

\begin{equation}\label{ej3}
J_{3}=-B \Big{(}R_{3}^{n+1},\frac{e^{n+1}+e^{n}}{2}\Big{)}
\leq \frac{B^{2}}{\eta}\|R_{3}^{n+1}\|_{-1}^{2}+\frac{\eta}{4}\|\nabla \frac{e^{n+1}+e^{n}}{2}\|^{2},
\end{equation}

\begin{equation}\label{ej4}
J_{4}=\varepsilon \Big{(} \Delta R_{4}^{n+1},\frac{e^{n+1}+e^{n}}{2}\Big{)}
\leq \frac{\varepsilon^{2}}{\eta}\|\nabla R_{4}^{n+1}\|^{2}+\frac{\eta}{4}\|\nabla \frac{e^{n+1}+e^{n}}{2}\|^{2}.
\end{equation}
For $J_{5}$ of the right side of (\ref{cet2}), by using 
$\delta_{tt}e^{n+1}=\delta_{t}e^{n+1}-\delta_{t}e^{n}$, we have
\begin{equation}\label{e11}
\begin{split}
J_{5}&=-B \Big{(}\delta_{tt}e^{n+1},\frac{e^{n+1}+e^{n}}{2}\Big{)}\\
&=-\frac{B}{2}(\|e^{n+1}\|^{2}-\|e^{n}\|^{2})+\frac{B}{2}(\delta_{t}e^{n},e^{n+1}+e^{n})\\
&\leq-\frac{B}{2}(\|e^{n+1}\|^{2}-\|e^{n}\|^{2})+\frac{B^{2}}{\eta}\|e^{n}-e^{n-1}\|^{2}_{-1}+ \frac{\eta}{4}\|\nabla \frac{e^{n+1}+e^{n}}{2}\|^{2}.
\end{split}
\end{equation}
\begin{equation}\label{e12}
\begin{split}
J_{6}&=-\frac{1}{\varepsilon}\Big{(}f(\frac{3}{2}\phi^{n}-\frac{1}{2}\phi^{n-1})-f(\phi(t^{n+\frac{1}{2}})),\frac{e^{n+1}+e^{n}}{2}\Big{)}\\
&\leq \frac{L}{\varepsilon}\Big{(}| R_{5}^{n+1}|,|\frac{e^{n+1}+e^{n}}{2}|\Big{)}+ \frac{L}{\varepsilon}\Big{(} |\frac{3}{2}e^{n}-\frac{1}{2}e^{n-1}|,|\frac{e^{n+1}+e^{n}}{2}|\Big{)}\\
&\leq \frac{L^{2}}{\varepsilon^{2}\eta}\Big{(}\|R_{5}^{n+1}\|_{-1}^{2} +\| \frac{3}{2}e^{n}-\frac{1}{2}e^{n-1} \|_{-1}^{2} \Big{)}+\frac{\eta}{2}\|\nabla \frac{e^{n+1}+e^{n}}{2}\|^{2},
\end{split}
\end{equation}
where
\begin{equation} \label{e5-5}
{R_{5}^{n+1}=}
 \frac{3}{2}\phi(t^{n})-\frac{1}{2}\phi(t^{n-1})-\phi(t^{n+\frac{1}{2}}).
\end{equation}
For the $R_1, \ldots, R_5$ terms, we have following estimates:
\begin{equation} \label{e6}
\|\Delta^{-1}R_{1}^{n+1}\|_{-1}^{2}  \lesssim
 \tau^{3}\int_{t^{n}}^{t^{n+1}}\|\partial_{tt}\Delta^{-1}\phi(t)\|_{-1}^{2}{\rm d}t,
\end{equation}
\begin{equation} \label{e8}
\|\nabla R_{2}^{n+1}\|^{2}  \lesssim
 \tau^{3}\int_{t^{n}}^{t^{n+1}}\|\partial_{t}\nabla \phi(t)\|^{2}{\rm d}t,
\end{equation}
\begin{equation}\label{e7}
  \|R_{3}^{n+1}\|_{-1}^{2}  \lesssim 6\tau^{3}\int_{t^{n-1}}^{t^{n+1}}\|\partial_{tt}\phi(t)\|_{-1}^{2}{\rm d}t,
\end{equation}
\begin{equation}\label{e7-1}
  \|\nabla R_{4}^{n+1}\|^{2}   \lesssim
   \tau^{3}\int_{t^{n}}^{t^{n+1}}\|\partial_{tt} \nabla \phi(t)\|^{2}{\rm d}t,
\end{equation}
\begin{equation}\label{e7-2}
  \|R_{5}^{n+1}\|_{-1}^{2}  \lesssim \tau^{3}\int_{t^{n-1}}^{t^{n+1}}\|\partial_{tt}\phi(t)\|_{-1}^{2}{\rm d}t.
\end{equation}
Substituting $J_{1},\cdots, J_{6}$ into (\ref{cet2}), we have
\begin{equation}\label{cet2-1}
  \begin{split}
   &\frac{1}{2\tau }(\|e^{n+1}\|_{-1}^{2}-\|e^{n}\|_{-1}^{2})
   +\varepsilon\|\nabla \frac{e^{n+1}+e^{n}}{2}\|^{2}
   +\frac{A\tau}{2}(\|\nabla e^{n+1}\|^{2}-\|\nabla e^{n}\|^{2})\\
   &+\frac{B}{2}(\|e^{n+1}\|^{2}-\|e^{n}\|^{2})\\
  \lesssim & \frac{1}{\eta}C_{1}^{n+1}\tau^{3}+\frac{7\eta}{4}\|\nabla \frac{e^{n+1}+e^{n}}{2}\|^{2}
  +\frac{B^{2}}{\eta}\|e^{n}-e^{n-1}\|^{2}_{-1}
  +\frac{L^{2}}{\varepsilon^{2}\eta}\| \frac{3}{2}e^{n}-\frac{1}{2}e^{n-1} \|_{-1}^{2},
   \end{split}
 \end{equation}
 where
 \begin{equation}\notag
 \begin{split}
   C_{1}^{n+1}=&\int_{t_{n}}^{t_{n+1}}(\|\partial_{tt}\Delta^{-1}\phi(t)\|_{-1}^{2}
+A^{2}\|\partial_{t}\nabla \phi(t)\|^{2}){\rm d}t\\
&+\int_{t_{n-1}}^{t_{n+1}}\Big{(}\Big{(}B^{2}+\frac{L^{2}}{\varepsilon^{2}}\Big{)}\|\partial_{tt}\phi(t)\|_{-1}^{2}
+\varepsilon^{2}\|\partial_{tt}\nabla \phi(t)\|^{2}\Big{)}{\rm d}t.
\end{split}
 \end{equation}
 Taking $\eta=\varepsilon/2 $, multiplying \eqref{cet2-1} by
 $2\tau$, we obtain \eqref{cet2-3} by using inequality
 $\|a+b\|^2\leq 2\|a\|^2+2\|b\|^2$ and estimates
 \eqref{e6}-\eqref{e7-2}.  Then by summing (\ref{cet2-1})
 for $n=1\cdots N$, we obtain
\begin{equation}\label{cet5}
  \begin{split}
   &(\|e^{N+1}\|_{-1}^{2}-\|e^{1}\|_{-1}^{2})
   +\frac{\varepsilon \tau}{4}\sum_{n=1}^{N}\|\nabla \frac{e^{n+1}+e^{n}}{2}\|^{2}
   +A\tau^{2}(\|\nabla e^{N+1}\|^{2}-\|\nabla e^{1}\|^{2})\\
   &+B\tau(\|e^{N+1}\|^{2}-\|e^{1}\|^{2})\\
  \lesssim & \frac{4}{\varepsilon}C_{1}\tau^{4}
  +\Big{(}\frac{16B^{2}\tau}{\varepsilon}+\frac{20L^{2}\tau}{\varepsilon^{3}}\Big{)}\sum_{n=1}^{N}\|e^{n}\|_{-1}^{2},\\
   \end{split}
 \end{equation}
where
\begin{equation}\label{cet5-0}
\begin{split}
C_{1}=&\sum_{n=1}^{N}C_{1}^{n+1}\\
\leq&\int_{0}^{T}\Big{(}\|\partial_{tt}\Delta^{-1}\phi(t)\|_{-1}^{2}
+A^{2}\|\partial_{t}\nabla \phi(t)\|^{2}
+2\Big{(}B^{2}+\frac{L^{2}}{\varepsilon^{2}}\Big{)}\|\partial_{tt}\phi(t)\|_{-1}^{2}\\
&+2\varepsilon^{2}\|\partial_{tt}\nabla\phi(t)\|^{2}\Big{)}{\rm d}t\\
\lesssim & \varepsilon^{-\max\{\rho_{1}, \rho_{2}+2, \rho_{4}-2, \rho_{6}+4\}}.
\end{split}
\end{equation}
by discrete Gronwall inequality and assumption \eqref{ap:022-2}, we get (\ref{cet5-1}).
\end{proof}

Proposition \ref{prio} is the usual error estimate, in
which the error growth depends on $T/\varepsilon^3$
exponentially. To obtain a finer estimate on the error, we
need to use a spectral estimate of the linearized
Cahn-Hilliard operator by Chen \cite{chen_spectrum_1994} for
the case when the interface is well developed in the
initial condition.
\begin{lm}\label{prop:01}
  Let $\phi(t)$ be the exact solution of the Cahn-Hilliard
  equation \eqref{eq:CH} with interfaces are well developed
  in the initial condition (i.e. conditions (1.9)-(1.15) in
  \cite{chen_spectrum_1994} are satisfied).  Then there
  exist $0<\varepsilon_{0}\ll 1$ and positive constant
  $C_{0}$ such that the principle eigenvalue of the
  linearized Cahn-Hilliard operator
  $\mathcal{L}_{CH}:=\Delta(\varepsilon\Delta-\frac{1}{\varepsilon}f'(\phi)I)$
  satisfies for all $t\in [0,T]$
  \begin{equation}\label{eigen}
    \lambda_{CH}=\inf_{\substack{0\neq v\in H^{1}(\Omega)\\ \Delta\omega=v}}
    \frac{\varepsilon\|\nabla v\|^{2}+\frac{1}{\varepsilon}(f'(\phi(\cdot,t))v,v)}{\|\nabla\omega\|^{2}}
    \geq-C_{0},
  \end{equation}
  for $\varepsilon\in (0,\varepsilon_{0})$. 
\end{lm}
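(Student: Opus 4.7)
The plan is to invoke the spectral estimate of Chen \cite{chen_spectrum_1994}, whose main spectral theorem gives precisely the lower bound \eqref{eigen} under the interface regularity conditions (1.9)--(1.15) cited in the hypothesis. Since the statement is almost verbatim from that reference, my proposal is to organize a proof sketch that mirrors Chen's strategy so the reader sees why only the interface-development hypothesis is needed; the full technical details would then be cited rather than reproduced.

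First, I would use matched asymptotic analysis to decompose $\phi(\cdot,t)$ into an inner expansion in a tubular neighborhood of the interface $\Gamma(t)$ (where $\phi$ transitions through $0$) and an outer expansion in the bulk regions (where $\phi\approx\pm 1$). Rescaling the signed distance to $\Gamma(t)$ by $\varepsilon^{-1}$ produces a one-dimensional leading-order profile $q$ solving $-q''+f(q)=0$ with $q(\pm\infty)=\pm 1$; in the bulk, $f'(\phi)$ is bounded below by a positive constant, so the contribution to the quadratic form in \eqref{eigen} from the bulk is automatically nonnegative up to $O(1)$ terms. The only dangerous piece is the interface layer, where $f'(\phi)$ can be negative and of order $O(1)$.

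Second, the core estimate is the one-dimensional spectral fact that $-\partial_{zz}+f'(q(z))$ on $L^2(\mathbf{R})$ has a single zero eigenvalue with eigenfunction $q'(z)$ (arising from translation invariance) and a uniform spectral gap above. Working in local Fermi coordinates $(s,r)$ around $\Gamma(t)$ and using the $H^{-1}$-type weight in the denominator of \eqref{eigen}, one splits any competitor $v$ into a component proportional to the approximate kernel $\varepsilon^{-1/2}q'(r/\varepsilon)$ along each normal slice and a component orthogonal to it. The orthogonal part is controlled by the 1D gap, giving
\begin{equation}\nonumber
\varepsilon\|\nabla v_\perp\|^2+\tfrac{1}{\varepsilon}(f'(\phi)v_\perp,v_\perp)\ \gtrsim\ \varepsilon\|\nabla v_\perp\|^2,
\end{equation}
while the kernel component is handled by noting that $v$ must satisfy $\Delta\omega=v$, and that functions close to the translation mode produce $\omega$ with $\|\nabla\omega\|$ comparable to $\varepsilon^{1/2}$ times the coefficient; curvature corrections from the local coordinates contribute only $O(1)$ to the resulting Rayleigh quotient, yielding the uniform lower bound $-C_0$.

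The hard part, and the reason Chen's paper is technically involved, is showing that the constant $C_0$ can be chosen independent of $t\in[0,T]$ and $\varepsilon\in(0,\varepsilon_0)$, using only the geometric regularity of $\Gamma(t)$; this requires careful control of how the orthogonal decomposition, the coordinate map, and the curvature terms depend on $t$ and $\varepsilon$. Since the present work only needs to apply the bound, my plan for the written proof is to state that the hypotheses inherited from the initial data propagate the interface-development assumptions of \cite{chen_spectrum_1994} throughout $[0,T]$, and then cite the spectral theorem therein directly.
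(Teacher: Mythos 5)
Your proposal is correct and matches what the paper does: Lemma~\ref{prop:01} is not proved in the paper but is quoted directly from Chen \cite{chen_spectrum_1994}, with the interface-development conditions (1.9)--(1.15) there serving as the hypotheses, exactly as you conclude. Your asymptotic sketch (inner/outer expansion, the one-dimensional spectral gap around the profile, and the $H^{-1}$ treatment of the near-translation mode) is a faithful outline of Chen's argument, but the paper, like your final plan, simply cites that theorem rather than reproducing it.
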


The following lemma shows the boundedness of the solution to
the Cahn-Hilliard equation, provided that its sharp-interface limit Hele-Shaw problem has a global (in time)
classical solution. This is a condition of the finer error
estimate.
\begin{lm}\label{lm:Linf}
  Suppose that f satisfies Assumption \ref{ap:1}, and the
  corresponding Hele-Shaw problem has a global (in time)
  classical solution. Then there exists a family of smooth
  initial functions
  $\{\phi_{0}^{\varepsilon}\}_{0< \varepsilon \leq 1}$ and
  constants $ \varepsilon_{0} \in (0,1]$ and $C > 0$ such
  that for all $\varepsilon \in (0,\varepsilon_{0})$ the
  solution $\phi(t)$ of the Cahn-Hilliard equation
  (\ref{eq:CH}) with the above initial data
  $\phi_{0}^{\varepsilon}$ satisfies
\begin{equation}\label{eq:phi}
  \|\phi(t)\|_{L^{\infty}}(0,T; \Omega) \leq C.
\end{equation}
\end{lm}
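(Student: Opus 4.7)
The plan is to follow the now-standard matched-asymptotic construction of Alikakos-Bates-Chen \cite{alikakos_convergence_1994} (see also Feng-Prohl \cite{feng_numerical_2005}), which reduces the $L^\infty$ bound on $\phi$ to an $L^\infty$ bound on a carefully chosen approximate solution plus a small error controlled by the spectrum estimate in Lemma \ref{prop:01}.

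First, under the hypothesis that the limiting Hele-Shaw problem has a global classical solution on $[0,T]$, I would construct a smooth family $\{\phi_0^{\varepsilon}\}_{0<\varepsilon\leq 1}$ of well-prepared initial data. The idea is to fix the initial interface $\Gamma_0$ of the Hele-Shaw flow and define $\phi_0^{\varepsilon}(x)$ so that, in a tubular neighborhood of $\Gamma_0$, it equals the stretched-variable profile $q(\operatorname{dist}(x,\Gamma_0)/\varepsilon)$ where $q(z)=\tanh(z/\sqrt{2})$ is the one-dimensional standing-wave solution to $q''=f(q)$, and equals $\pm 1$ away from the interface, with smooth cut-offs. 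By construction $\|\phi_0^{\varepsilon}\|_{L^\infty}\le 1$ and $\phi_0^\varepsilon$ satisfies the regularity needed for Assumption \ref{ap:02}.

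Next, I would build a higher-order matched asymptotic expansion $\phi_A^{\varepsilon}(x,t)=\phi_A^{(0)}+\varepsilon\phi_A^{(1)}+\cdots+\varepsilon^K\phi_A^{(K)}$ valid for $t\in[0,T]$, whose outer solution equals $\pm 1$ in the bulk and whose inner solution, written in the stretched variable $z=d(x,t)/\varepsilon$ (with $d$ the signed distance to the Hele-Shaw interface $\Gamma(t)$), has $q(z)$ as its leading term; the higher-order correctors are governed by linear equations driven by the Hele-Shaw solution and are globally smooth in $(x,t)$ as long as $\Gamma(t)$ stays classical. Because $|q|<1$ and the correctors are uniformly bounded smooth functions, one has $\|\phi_A^{\varepsilon}\|_{L^\infty(0,T;\Omega)}\le 1+C\varepsilon$. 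By design, $\phi_A^\varepsilon$ solves the Cahn-Hilliard equation up to a residual $r^\varepsilon(x,t)$ that is $O(\varepsilon^K)$ in appropriate norms; for $K$ large enough this residual is small compared with the Gronwall factors that arise in the error estimate.

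The core analytic step is then to bound the error $w^\varepsilon := \phi-\phi_A^{\varepsilon}$. Writing the equation satisfied by $w^\varepsilon$, testing with $(-\Delta)^{-1}w^\varepsilon$, and separating the quadratic form $\frac{1}{\varepsilon}(f'(\phi_A^\varepsilon)w^\varepsilon,w^\varepsilon)$, I would use precisely the Chen spectrum inequality \eqref{eigen} (with the approximate solution playing the role of $\phi$, which is legitimate since the spectrum bound depends only on the geometry of the profile) to absorb the potentially bad negative contribution into $\varepsilon\|\nabla w^\varepsilon\|^2$ up to the harmless constant $-C_0\|w^\varepsilon\|_{-1}^2$. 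A standard Gronwall argument on the resulting differential inequality, combined with a bootstrap controlling the nonlinear remainder $f(\phi)-f(\phi_A^\varepsilon)-f'(\phi_A^\varepsilon)w^\varepsilon$ (here the uniform bound on $f''$ from Assumption \ref{ap:1} is essential), yields $\|w^\varepsilon(t)\|_{H^s}\le C\varepsilon^{K-s}$ for $s$ large enough, for all $t\in[0,T]$. Choosing $K>s>d/2$ and invoking the Sobolev embedding $H^s(\Omega)\hookrightarrow L^\infty(\Omega)$ gives $\|w^\varepsilon\|_{L^\infty(0,T;\Omega)}\le C\varepsilon$, and combining with the $L^\infty$ bound on $\phi_A^\varepsilon$ gives $\|\phi\|_{L^\infty(0,T;\Omega)}\le 1+C\varepsilon\le C$ for $\varepsilon<\varepsilon_0$.

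The main obstacle is the third step: controlling the nonlinear Gronwall constant uniformly in $\varepsilon$ requires the spectrum bound \eqref{eigen} to remove the apparent $\varepsilon^{-1}$ blow-up coming from $f'(\phi_A^\varepsilon)$, and requires the residual $r^\varepsilon$ to beat the exponential amplification $\exp(C_0 T)$. This is precisely where the hypothesis that the Hele-Shaw problem has a classical solution on $[0,T]$ (avoiding topological singularities) is used, since it guarantees that the asymptotic expansion and the spectral lower bound remain valid uniformly on $[0,T]$.
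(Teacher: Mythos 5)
Your proposal is essentially the paper's own proof: the authors give no argument of their own and simply defer to \cite{alikakos_convergence_1994} and \cite{feng_numerical_2005}, and your sketch (well-prepared initial data built from the $\tanh$ profile, a higher-order matched asymptotic approximation valid while the Hele-Shaw solution stays classical, and an error estimate closed via Chen's spectral bound, Gronwall, and Sobolev embedding) is exactly the strategy carried out in those references. So the approach matches; the details you outline are the ones worked out there.
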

\begin{proof}
See \cite{feng_numerical_2005} and \cite{alikakos_convergence_1994}
for the detailed proof.
\end{proof}

Now we present the refined error estimate.
\begin{theorem}\label{error}
Suppose all of the Assumption \ref{ap:1},\ref{ap:02},\ref{ap:03} hold and $B>L/2\varepsilon$. Let time
  step $\tau$ satisfy the following constraint
 \begin{equation}\label{error-1}
 \begin{split}
  \tau \lesssim \min \left\{
  \varepsilon^{6},
 \varepsilon^{\frac{1}{18-d}(4\sigma+d +38)}
  \right\},
 \end{split}
 \end{equation}
 then the solution of (\ref{eq:CN:1})-(\ref{eq:CN:2})
  satisfies the following error estimate
\begin{equation}\label{error_2}
   \begin{split}
   &\max_{1\leq n\leq N}\Big(
   \|e^{n+1}\|_{-1}^{2}
   +\tau(A\tau+\varepsilon)\|\nabla e^{n+1}\|^{2}
   +B\tau\|\delta_{t}e^{n+1}\|^{2}
   \Big)
   \\
   &+\sum_{n=1}^{N}\left[\frac{\tau\varepsilon^4}{2}\|\nabla\frac{e^{n+1}+e^{n}}{2}\|^{2}
   +\|\delta_{t}e^{n+1}\|_{-1}^{2}
   +2A\tau^2\|\nabla \delta_{t}e^{n+1}\|^{2}
   +\tau\big(B-\frac{L}{2\varepsilon}\big)\|\delta_{tt}e^{n+1}\|^{2}
   \right]
   \\
   \lesssim{}&\varepsilon^{-\sigma}
   \mathrm{exp }(4(C_{0}+L^{2}+1)T)\tau^{4}.\\
   \end{split}
\end{equation}   
where $\sigma=
\max\{\rho_{1}+4, \rho_{2}+6, \rho_{4}+2, \rho_{5}-8, \rho_{6}+8, \rho_{7}-2, \sigma_0\}$.
\end{theorem}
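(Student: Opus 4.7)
The plan is to mirror Proposition~\ref{prio} but to combine two test-function pairings and, crucially, to replace the crude Lipschitz estimate of the term $J_6$ by a Taylor expansion of $f$ around the exact solution, so that the dominant linear piece can be controlled by the Chen spectrum estimate (Lemma~\ref{prop:01}) instead of by a bound proportional to $L/\varepsilon^3$. Concretely, I would pair \eqref{cet} with $-\Delta^{-1}\bigl(\tfrac{e^{n+1}+e^n}{2}\bigr)$ and \eqref{cet1} with $-\tfrac{e^{n+1}+e^n}{2}$ as in Proposition~\ref{prio}, and additionally pair \eqref{cet} with $-\Delta^{-1}\delta_{t}e^{n+1}$ and \eqref{cet1} with $-\delta_{t}e^{n+1}$ so that the dissipation terms $\|\delta_{t}e^{n+1}\|_{-1}^{2}$, $A\tau^{2}\|\nabla\delta_{t}e^{n+1}\|^{2}$ and $\tau\bigl(B-\tfrac{L}{2\varepsilon}\bigr)\|\delta_{tt}e^{n+1}\|^{2}$ appearing on the left of~\eqref{error_2} are generated. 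The truncation terms $J_1$ through $J_4$ are handled with Young's inequality using $\eta\sim\varepsilon^{4}$ (rather than $\varepsilon$ as in Proposition~\ref{prio}) to leave room for the spectral argument.

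The key analytic step is to expand
\[
f(\hat{\phi}^{n+\frac12})-f(\phi(t^{n+\frac12}))
=f'(\phi(t^{n+\frac12}))\bigl(\hat{e}^{n+\frac12}+R_5^{n+1}\bigr)
+\tfrac12 f''(\xi)\bigl(\hat{e}^{n+\frac12}+R_5^{n+1}\bigr)^{2},
\]
together with the algebraic identity $\hat{e}^{n+\frac12}=\tfrac{e^{n+1}+e^n}{2}-\tfrac12\delta_{tt}e^{n+1}$. The leading term $\tfrac{1}{\varepsilon}\bigl(f'(\phi(t^{n+\frac12}))v,v\bigr)$ with $v=\tfrac{e^{n+1}+e^n}{2}$ combines with the diffusion $\varepsilon\|\nabla v\|^{2}$ produced by $-\varepsilon\Delta v$ and yields, via Lemma~\ref{prop:01}, the lower bound $-C_{0}\|v\|_{-1}^{2}$, which depends on $\varepsilon$ only through the $O(1)$ constant $C_0$. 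The $\delta_{tt}e^{n+1}$ cross-term is absorbed into the $\tau\bigl(B-\tfrac{L}{2\varepsilon}\bigr)\|\delta_{tt}e^{n+1}\|^{2}$ dissipation (positive thanks to the hypothesis $B>L/2\varepsilon$), and the $R_5$ contribution contributes a clean $\tau^{4}$ factor via Taylor's theorem and Assumption~\ref{ap:03}.

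The main technical obstacle is the quadratic remainder $\tfrac{1}{\varepsilon}\bigl(f''(\xi)(\hat{e}^{n+\frac12}+R_5)^{2},v\bigr)$. Controlling it without reintroducing an $\exp(T/\varepsilon^{3})$ blow-up requires an $L^{\infty}$ bound on $\hat{e}^{n+\frac12}$, which I would bootstrap from Proposition~\ref{prio}: bounding by H\"older as $\tfrac{L_{2}}{\varepsilon}\|\hat{e}^{n+\frac12}\|_{L^{\infty}}\|\hat{e}^{n+\frac12}\|\,\|v\|$ and invoking a Gagliardo--Nirenberg inequality $\|e^n\|_{L^{\infty}}\lesssim\|e^n\|^{1-d/(2s)}\|e^n\|_{H^{s}}^{d/(2s)}$ for an appropriate $s>d/2$, the coarse estimate~\eqref{cet5-1}, although exponentially large in $\varepsilon^{-3}$, still yields $\|e^n\|_{L^\infty}=o(1)$ precisely under the step size restriction~\eqref{error-1}; the exponents in~\eqref{error-1} are tuned so that this smallness beats the exponential factor carried by~\eqref{cet5-1} and makes the quadratic remainder absorbable by the $\tfrac{\tau\varepsilon^{4}}{2}\|\nabla v\|^{2}$ dissipation on the left.

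Summing the resulting inequality for $n=1,\dots,N$, using Assumption~\ref{ap:02}~\eqref{ap:022-2} to initialize and Assumption~\ref{ap:03} to integrate the truncation contributions $R_1,\dots,R_5$, one is left with a discrete Gronwall inequality whose amplification constant is the $\varepsilon$-independent quantity $4(C_0+L^2+1)$; the lemma then delivers~\eqref{error_2}. The two most delicate points are (i) closing the bootstrap, i.e.\ verifying that the exponentially large coarse bound combined with~\eqref{error-1} and Gagliardo--Nirenberg really delivers the required $L^\infty$ smallness, and (ii) a careful tracking of the Young-type parameters so that every non-spectral contribution lands on the dissipation side and the final Gronwall constant contains no negative power of $\varepsilon$.
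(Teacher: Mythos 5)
Your overall skeleton matches the paper's proof: the same two pairings (with $-\Delta^{-1}\tfrac{e^{n+1}+e^n}{2}$, $-\tfrac{e^{n+1}+e^n}{2}$ and with $-\Delta^{-1}\delta_t e^{n+1}$, $-\delta_t e^{n+1}$), a Taylor expansion of $f$ about $\phi(t^{n+\frac12})$ so that the linear part is handled by Chen's spectral estimate, Young parameters of size $\varepsilon^4$, and a final Gronwall step with an $\varepsilon$-independent rate. The gap is in how you close the quadratic remainder, and it is fatal as stated. First, you propose to get the needed smallness of the error from the coarse estimate \eqref{cet5-1} and claim the step restriction \eqref{error-1} "beats" its exponential prefactor. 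It cannot: \eqref{cet5-1} carries $\exp\big(\tfrac{16B^2T}{\varepsilon}+\tfrac{20L^2T}{\varepsilon^3}\big)$, which grows faster than any negative power of $\varepsilon$, while \eqref{error-1} only imposes $\tau\lesssim\varepsilon^{\mathrm{poly}}$; to neutralize that factor you would need $\tau$ exponentially small in $\varepsilon^{-3}$, which is exactly what the theorem is designed to avoid. Second, your absorption mechanism needs $\|\hat e^{n+\frac12}\|_{L^\infty}$ via Gagliardo--Nirenberg with an $H^s$ norm, $s>d/2$, of the error; no such norm of the error is controlled anywhere (the available bounds give at most $H^1$-type control, and $H^1\not\hookrightarrow L^\infty$ for $d=2,3$). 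Note also that the paper's $L^\infty$ input (Lemma \ref{lm:Linf}) is a bound on the exact solution, used only to control the $R_4$ cross terms; it is not a bound on the error.

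The paper closes the loop differently, and this is the idea your proposal is missing. The cubic error term is never sent to $L^\infty$: it is reduced by Sobolev interpolation to $\frac{L_2}{\varepsilon}\|v\|_{L^3}^3\le G^{n+1}\|\nabla v\|^2$ with $G^{n+1}=\frac{L_2K}{\varepsilon}\|\nabla v\|^{\frac{d-2}{4}}\|v\|_{-1}^{\frac{6-d}{4}}$, i.e. only norms that the scheme actually controls. Then the argument proceeds by induction on the time step: assuming the refined estimate \eqref{error_2} (with its $\varepsilon$-independent exponential) holds up to step $N$, one feeds it into the \emph{single-step} coarse inequality \eqref{cet2-3} — not the Gronwall-summed \eqref{cet5-1} — to bound $\|e^{N+1}\|_{-1}$ and $\|\nabla\tfrac{e^{N+1}+e^N}{2}\|$ by $\varepsilon^{-\sigma}\tau^4$-type quantities, which yields $G^{N+1}\le\varepsilon^4/4$ precisely under the polynomial restriction $\tau\lesssim\varepsilon^{\frac{1}{18-d}(4\sigma+d+38)}$; the smallness is then absorbed by the $\varepsilon^4\|\nabla v\|^2$ dissipation and the Gronwall constant stays at $4(C_0+L^2+1)$. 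Unless you replace your bootstrap-from-\eqref{cet5-1} plus $L^\infty$ step by this induction (or an equivalent device), the argument does not close. A smaller caveat: absorbing the $f'\,\delta_{tt}e^{n+1}$ cross term directly into the $\big(B-\tfrac{L}{2\varepsilon}\big)\|\delta_{tt}e^{n+1}\|^2$ dissipation is risky since $B-\tfrac{L}{2\varepsilon}$ may be arbitrarily small; the paper only uses its sign and instead routes $\|\delta_{tt}e^{n+1}\|_{-1}^2\le 2\|\delta_t e^{n+1}\|_{-1}^2+2\|\delta_t e^{n}\|_{-1}^2$ into the $\tfrac1\tau\|\delta_t e^{n+1}\|_{-1}^2$ term, which is where the requirement $\tau\lesssim\varepsilon^6$ comes from.
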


\begin{proof}
(i) To get a better convergence result, we re-estimate $J_5,J_6$ in \eqref{cet2} as
\begin{equation}\label{e11-2}
 \begin{split}
 J_{5}&= -B \Big{(}\delta_{tt}e^{n+1},\frac{e^{n+1}+e^{n}}{2}\Big{)}
 \leq \frac{B^{2}}{\eta}\|\delta_{tt}e^{n+1}\|^{2}_{-1}+\frac{\eta}{4}\|\nabla \frac{e^{n+1}+e^{n}}{2}\|^{2},
  \end{split}
\end{equation}
\begin{equation}\label{j1}
\begin{split}
  J_{6}=&-\frac{1}{\varepsilon}\Big{(}f(\frac{3}{2}\phi^{n}-\frac{1}{2}\phi^{n-1})-f(\phi(t^{n+\frac{1}{2}})),\frac{e^{n+1}+e^{n}}{2}\Big{)}\\
  =&-\frac{1}{\varepsilon}\Big{(}f(\frac{3}{2}\phi^{n}-\frac{1}{2}\phi^{n-1})-f(\frac{\phi^{n+1}+\phi^{n}}{2}),\frac{e^{n+1}+e^{n}}{2}\Big{)}\\
  &-\frac{1}{\varepsilon}\Big{(}f(\frac{\phi^{n+1}+\phi^{n}}{2})-f(\phi(t^{n+\frac{1}{2}})),\frac{e^{n+1}+e^{n}}{2}\Big{)}\\
  :=&J_{7}+J_{8},
  \end{split}
\end{equation}
\begin{equation}\label{j2}
\begin{split}
  J_{7}
  =&-\frac{1}{\varepsilon}\Big{(}f(\frac{3}{2}\phi^{n}-\frac{1}{2}\phi^{n-1})-f(\frac{\phi^{n+1}+\phi^{n}}{2}),\frac{e^{n+1}+e^{n}}{2}\Big{)}\\
  \leq & \frac{L}{2 \varepsilon}\Big{(}| \delta_{tt}\phi^{n+1}|,|\frac{e^{n+1}+e^{n}}{2}|\Big{)}\\
  =&  \frac{L}{2 \varepsilon}\Big{(}| \delta_{tt}e^{n+1}+R_{3}^{n+1}|,|\frac{e^{n+1}+e^{n}}{2}|\Big{)}\\
  \leq & \frac{L^{2}}{4 \varepsilon^{2}\eta }\|\delta_{tt}e^{n+1}\|_{-1}^{2}+\frac{L^{2}}{4 \varepsilon^{2} \eta}\|R_{3}^{n+1}\|_{-1}^{2} +\frac{\eta}{2}\|\nabla\frac{e^{n+1}+e^{n}}{2}\|^{2}.
  \end{split}
\end{equation}
For $J_8$, by Taylor expansion, there exists $\vartheta^{n+1}$ between  $\frac{\phi^{n+1}+\phi^{n}}{2}$ and $\phi(t^{n+\frac{1}{2}})$, such that
\begin{equation}\label{j3}
\begin{split}
  J_{8}
  =&-\frac{1}{\varepsilon}\Big{(}f(\frac{\phi^{n+1}+\phi^{n}}{2})-f(\phi(t^{n+\frac{1}{2}})),\frac{e^{n+1}+e^{n}}{2}\Big{)}\\
  =&-\frac{1}{\varepsilon}\Big{(}f'(\phi(t^{n+\frac{1}{2}})) \Big{(}\frac{e^{n+1}+e^{n}}{2}+R_{4}^{n+1}\Big{)},\frac{e^{n+1}+e^{n}}{2}\Big{)}\\
  &-\frac{1}{2\varepsilon}\Big{(}f''(\vartheta^{n+1}) \Big{(}\frac{e^{n+1}+e^{n}}{2}+R_{4}^{n+1}\Big{)}^{2},\frac{e^{n+1}+e^{n}}{2}\Big{)}\\
  \leq &-\frac{1}{\varepsilon}\Big{(}f'(\phi(t^{n+\frac{1}{2}})) \frac{e^{n+1}+e^{n}}{2},\frac{e^{n+1}+e^{n}}{2}\Big{)}
  +\frac{L_{2}}{\varepsilon}\|\frac{e^{n+1}+e^{n}}{2}\|^{3}_{L^{3}}\\
  &+\frac{1}{ \varepsilon^{2}\eta}C_{2}\|R_{4}^{n+1}\|_{-1}^{2}+\frac{\eta}{2}\|\nabla \frac{e^{n+1}+e^{n}}{2}\|^{2},
  \end{split}
 \end{equation}
 where $ C_{2}=L^{2}+4L^{2}_{2}\| \phi(t)\|_{\infty}^{2} \leq L^{2}+4L^{2}_{2} C^{2}$. 
Here we assume that the conditions of Lemma \ref{lm:Linf} are satisfied. 

\indent Substituting $J_{1},\cdots, J_{8}$ into (\ref{cet2}), then we have
\begin{equation}\label{j4}
  \begin{split}
   &\frac{1}{2\tau }(\|e^{n+1}\|_{-1}^{2}-\|e^{n}\|_{-1}^{2})
   +\varepsilon\|\nabla \frac{e^{n+1}+e^{n}}{2}\|^{2}
   +\frac{A\tau}{2}(\|\nabla e^{n+1}\|^{2}-\|\nabla e^{n}\|^{2})\\
  \leq & \frac{1}{\eta}\|\Delta^{-1}R_{1}^{n+1}\|_{-1}^{2}+\frac{A^{2}}{\eta}\|\nabla R_{2}^{n+1}\|^{2}
  +\Big{(}\frac{B^{2}}{\eta} +\frac{L^{2}}{4 \varepsilon^{2} \eta}\Big{)}\|R_{3}^{n+1}\|_{-1}^{2}
   +\frac{\varepsilon^{2}}{\eta}\|\nabla R_{4}^{n+1}\|^{2}\\
     &+\frac{1}{ \varepsilon^{2}\eta}C_{2}\|R_{4}^{n+1}\|_{-1}^{2}
     +\frac{9}{4}\eta\|\nabla \frac{e^{n+1}+e^{n}}{2}\|^{2}
   +\Big{(}\frac{B^{2}}{\eta}+ \frac{L^{2}}{4 \varepsilon^{2}\eta }\Big{)}\|\delta_{tt}e^{n+1}\|^{2}_{-1}\\
   &-\frac{1}{\varepsilon}\Big{(}f'(\phi(t^{n+\frac{1}{2}})) \frac{e^{n+1}+e^{n}}{2},\frac{e^{n+1}+e^{n}}{2}\Big{)}
  +\frac{L_{2}}{\varepsilon}\|\frac{e^{n+1}+e^{n}}{2}\|^{3}_{L^{3}}.\\
   \end{split}
 \end{equation}
We need to bound the last three terms on the right hand side of above inequality.

(ii) To control the $\|\delta_{tt}e^{n+1} \|_{-1}^2$ term, 
we pair (\ref{cet}) with $-\Delta^{-1}\delta_{t}e^{n+1}$, then add (\ref{cet1}) paired with $-\delta_{t}e^{n+1}$, to get
\begin{equation}\label{t0}
  \begin{split}
   &\frac{1}{\tau}\|\delta_{t}e^{n+1}\|_{-1}^{2}
   +\frac{\varepsilon}{2}(\|\nabla e^{n+1}\|^{2}-\|\nabla e^{n}\|^{2})
   +A\tau\|\nabla \delta_{t}e^{n+1}\|^{2}\\
   &+\frac{B}{2}(\|\delta_{t}e^{n+1}\|^{2}-\|\delta_{t}e^{n}\|^{2}+\|\delta_{tt}e^{n+1}\|^{2})\\
   =&-(R_{1}^{n+1},\Delta ^{-1 }\delta_{t}e^{n+1})
   +A(\Delta R_{2}^{n+1},\delta_{t}e^{n+1})
   -B (R_{3}^{n+1},\delta_{t}e^{n+1})\\
   &+\varepsilon ( \Delta R_{4}^{n+1},\delta_{t}e^{n+1})
   -\frac{1}{\varepsilon}\Big{(}f(\frac{3}{2}\phi^{n}-\frac{1}{2}\phi^{n-1})-f(\phi(t^{n+\frac{1}{2}})),\delta_{t}e^{n+1}\Big{)}\\
  =&:\widetilde{J}_{1}+\widetilde{J}_{2}+\widetilde{J}_{3}+\widetilde{J}_{4}+\widetilde{J}_{5}=:\widetilde{J},\ \ \ \rm{n\geq1}.\\
   \end{split}
 \end{equation}
  Analogously, applying the method for $J_{1}, \cdots, J_{4}$ to $\widetilde{J}_{1}, \cdots, \widetilde{J}_{4}$, yields
 \begin{equation}\label{t1}
  \begin{split}
  \widetilde{J}_{1}=&-(R_{1}^{n+1},\Delta ^{-1 }\delta_{t}e^{n+1})
  \leq \frac{1}{\tilde{\eta}}\|R_{1}^{n+1}\|_{-1}^{2}+\frac{\tilde{\eta}}{4}\|\delta_{t}e^{n+1}\|^{2}_{-1},
   \end{split}
 \end{equation}
  \begin{equation}\label{t2}
  \begin{split}
  \widetilde{J}_{2}=&A(\Delta R_{2}^{n+1},\delta_{t}e^{n+1})
  \leq \frac{A^{2}}{\tilde{\eta}}\|\nabla \Delta R_{2}^{n+1}\|^{2}+\frac{\tilde{\eta}}{4}\|\delta_{t}e^{n+1}\|^{2}_{-1},
   \end{split}
 \end{equation}
 \begin{equation}\label{t3}
  \begin{split}
  \widetilde{J}_{3}=&-B (R_{3}^{n+1},\delta_{t}e^{n+1})
  \leq \frac{B^{2}}{\tilde{\eta}}\|\nabla R_{3}^{n+1}\|^{2}+\frac{\tilde{\eta}}{4}\|\delta_{t}e^{n+1}\|^{2}_{-1},
   \end{split}
 \end{equation}
 \begin{equation}\label{t4}
  \begin{split}
  \widetilde{J}_{4}=&\varepsilon ( \Delta R_{4}^{n+1},\delta_{t}e^{n+1})
  \leq \frac{\varepsilon^{2}}{\tilde{\eta}}\|\nabla \Delta R_{4}^{n+1}\|^{2}+\frac{\tilde{\eta}}{4}\|\delta_{t}e^{n+1}\|^{2}_{-1}.
   \end{split}
 \end{equation}
 For $\widetilde{J}_{5}$ of (\ref{t0}), we have
  \begin{equation}\label{t5}
     \begin{split}
    \widetilde{J}_{5}=
   &-\frac{1}{\varepsilon}\Big{(}f(\frac{3}{2}\phi^{n}-\frac{1}{2}\phi^{n-1})-f(\phi(t^{n+\frac{1}{2}})),\delta_{t}e^{n+1}\Big{)}\\
  \leq &
  -\frac{1}{\varepsilon}\Big{(}f'(\xi^{n+1})\Big{(}
  -\frac{1}{2}\delta_{tt} e^{n+1}- \frac{1}{2}R_{3}^{n+1}
  +\frac{e^{n+1}+e^{n}}{2}+R_{4}^{n+1}\Big{)},\delta_{t}e^{n+1} \Big{)}\\
  \leq & \frac{1}{2 \varepsilon}(f'(\xi^{n+1})\delta_{tt}e^{n+1}, \delta_{t}e^{n+1})+
  \frac{L^{2}}{4\varepsilon^{2}\tilde{\eta}}\|\nabla R_{3}^{n+1}\|^{2}+\frac{\tilde{\eta}}{4}\|\delta_{t}e^{n+1}\|^{2}_{-1}\\
  &+\frac{\eta}{4}\|\nabla \frac{e^{n+1}+e^{n}}{2}\|^{2}+\frac{L^{2}}{ \varepsilon^{2}\eta }\|\delta_{t}e^{n+1}\|^{2}_{-1}
  +\frac{L^{2}}{\varepsilon^{2}\tilde{\eta}}\|\nabla R_{4}^{n+1}\|^{2}+\frac{\tilde{\eta}}{4}\|\delta_{t}e^{n+1}\|^{2}_{-1},\\
   \end{split}
 \end{equation}
 where $\xi^{n+1}$ is a fixed number between $\frac{3}{2} \phi^{n} -\frac{ 1}{2}\phi^{n-1}$ and $\phi(t^{n+\frac{1}{2}})$.
Now, we estimate the first term on the right hand side of (\ref{t5}).
\begin{equation}\label{1}
\begin{split}
  &\dfrac{1}{2\varepsilon} (f'(\xi^{n+1}) \delta_{tt} e^{n+1}, \delta_{t}e^{n+1} )\\
  =& \dfrac{1}{4 \varepsilon} (f'(\xi^{n+1}),( \delta_{t} e^{n+1})^{2}-( \delta_{t}e^{n})^{2} + ( \delta_{tt} e^{n+1})^{2})\\
  \leq & \dfrac{1}{4 \varepsilon}(f'(\xi^{n+1})\delta_{tt}e^{n+1} ,
  \delta_t e^{n+1} + \delta_t e^n) 
  +\dfrac{L}{4\varepsilon}\|\delta_{tt}e^{n+1}\|^{2}\\
  \leq & \dfrac{L^2}{\varepsilon^2\eta}\|\delta_{tt}e^{n+1}\|_{-1}^2 +
  \dfrac{\eta}{64}\|\nabla (e^{n+1}-e^{n-1}) \|^2
  +\dfrac{L}{4\varepsilon}\|\delta_{tt}e^{n+1}\|^{2}\\
   \leq & \dfrac{L^2}{\varepsilon^2\eta}\|\delta_{tt}e^{n+1}\|_{-1}^2 
   +\dfrac{\eta}{8} \|\nabla \frac{e^{n+1}+e^n}{2} \|^2
   +\dfrac{\eta}{8} \|\nabla \frac{e^{n}+e^{n-1}}{2} \|^2
   +\dfrac{L}{4\varepsilon}\|\delta_{tt}e^{n+1}\|^{2}.\\
   \end{split}
\end{equation}
Combination of (\ref{t5}) and (\ref{1}) yields
\begin{equation}\label{t5-0}
 \begin{split}
    \widetilde{J}_{5}
    \leq & 
    \frac{L^{2}}{4\varepsilon^{2}\tilde{\eta}}\|\nabla R_{3}^{n+1}\|^{2}
    +\frac{L^{2}}{\varepsilon^{2}\tilde{\eta}}\|\nabla R_{4}^{n+1}\|^{2}
    +\left(\frac{\tilde{\eta}}{2}+\frac{L^{2}}{\varepsilon^{2}\eta} \right)\|\delta_{t}e^{n+1}\|^{2}_{-1}
    +\dfrac{L^2}{\varepsilon^2\eta}\|\delta_{tt}e^{n+1}\|_{-1}^2 \\
    &+\dfrac{3\eta}{8} \|\nabla \frac{e^{n+1}+e^n}{2} \|^2
     +\dfrac{\eta}{8} \|\nabla \frac{e^{n}+e^{n-1}}{2} \|^2
     +\dfrac{L}{4\varepsilon}\|\delta_{tt}e^{n+1}\|^{2}.\\
   \end{split}
 \end{equation}
Substituting $\widetilde{J}_{1},\cdots, \widetilde{J}_{5}$ into (\ref{t0}), we have
\begin{equation}\label{t6}
  \begin{split}
 &\frac{1}{\tau}\|\delta_{t}e^{n+1}\|_{-1}^{2}
 +\frac{\varepsilon}{2}(\|\nabla e^{n+1}\|^{2}-\|\nabla e^{n}\|^{2})
 +A\tau\|\nabla \delta_{t}e^{n+1}\|^{2}\\
 &+\frac{B}{2}(\|\delta_{t}e^{n+1}\|^{2}-\|\delta_{t}e^{n}\|^{2}+\|\delta_{tt}e^{n+1}\|^{2})\\
 \leq& \frac{1}{\tilde{\eta}}\|R_{1}^{n+1}\|_{-1}^{2}
   + \frac{A^{2}}{\tilde{\eta}}\|\nabla \Delta R_{2}^{n+1}\|^{2}
   + \Big{(}\frac{B^{2}}{\tilde{\eta}}+\frac{L^{2}}{4 \varepsilon^{2}\tilde{\eta}}\Big{)}\|\nabla R_{3}^{n+1}\|^{2}
   + \frac{\varepsilon^{2}}{\tilde{\eta}}\|\nabla \Delta R_{4}^{n+1}\|^{2}\\
   &+\frac{L^{2}}{ \varepsilon^{2}\tilde{\eta}}\|\nabla R_{4}^{n+1}\|^{2}
  +\Big{(}\frac{L^{2}}{ \varepsilon^{2}\eta }+\frac{3\tilde{\eta}}{2}\Big{)}\|\delta_{t}e^{n+1}\|^{2}_{-1}
      +\dfrac{L^2}{\varepsilon^2\eta}\|\delta_{tt}e^{n+1}\|_{-1}^2 \\
      &+\dfrac{3\eta}{8} \|\nabla \frac{e^{n+1}+e^n}{2} \|^2
      +\dfrac{\eta}{8} \|\nabla \frac{e^{n}+e^{n-1}}{2} \|^2
      +\dfrac{L}{4\varepsilon}\|\delta_{tt}e^{n+1}\|^{2}.\\     
 \end{split}
 \end{equation}
 Combining (\ref{j4}) and (\ref{t6}), then using triangle
 inequality
 $\|\delta_{tt}e^{n+1}\|^{2}_{-1}\leq 2
 \|\delta_{t}e^{n+1}\|_{-1}^{2}+2
 \|\delta_{t}e^{n}\|_{-1}^{2}$,
 \eqref{e6}-\eqref{e7-2} and following estimates
\begin{equation} \label{e6j}
\|R_{1}^{n+1}\|_{-1}^{2}\lesssim
\tau^{3}\int_{t^{n}}^{t^{n+1}}\|\partial_{tt}\phi(t)\|_{-1}^{2}{\rm d}t,
\end{equation}
\begin{equation} \label{e8j}
\|\nabla \Delta R_{2}^{n+1}\|^{2}\lesssim
\tau^{3}\int_{t^{n}}^{t^{n+1}}\|\partial_{t}\nabla \Delta \phi(t)\|^{2}{\rm d}t,
\end{equation}
\begin{equation}\label{e7j}
\|\nabla R_{3}^{n+1}\|^{2}\lesssim 6\tau^{3}\int_{t^{n-1}}^{t^{n+1}}\|\partial_{tt}\nabla \phi(t)\|^{2}{\rm d}t,
\end{equation}
\begin{equation}\label{e7-1j}
\|\nabla \Delta  R_{4}^{n+1}\|^{2}\lesssim
\tau^{3}\int_{t^{n}}^{t^{n+1}}\|\partial_{tt}\nabla \Delta \phi(t)\|^{2}{\rm d}t,
\end{equation}
\begin{equation}\label{e7-2j}
\| R_{4}^{n+1}\|_{-1}^{2}\lesssim
\tau^{3}\int_{t_{n}}^{t_{n+1}}\|\partial_{tt}\phi(t)\|_{-1}^{2}{\rm d}t,
\end{equation}
we obtain
\begin{equation}\label{t7-2}
  \begin{split}
   &\frac{1}{2\tau }(\|e^{n+1}\|_{-1}^{2}-\|e^{n}\|_{-1}^{2})
   +\varepsilon\|\nabla\frac{e^{n+1}+e^{n}}{2}\|^{2}
   +\frac{A\tau+\varepsilon}{2}(\|\nabla e^{n+1}\|^{2}-\|\nabla e^{n}\|^{2})\\
    &+\frac{1}{\tau}\|\delta_{t}e^{n+1}\|_{-1}^{2}
    +A\tau\|\nabla \delta_{t}e^{n+1}\|^{2}
    +\frac{B}{2}(\|\delta_{t}e^{n+1}\|^{2}-\|\delta_{t}e^{n}\|^{2}+\|\delta_{tt}e^{n+1}\|^{2})\\
\lesssim &
  \widetilde{C}_{1}^{n+1}\tau^{3}
  +\frac{21\eta}{8}\|\nabla \frac{e^{n+1}+e^{n}}{2}\|^{2}
  +\dfrac{\eta}{8} \|\nabla \frac{e^{n}+e^{n-1}}{2} \|^2
  \\
  &+\Big{(}\frac{2B^{2}}{\eta}+ \frac{5L^{2}}{2\varepsilon^{2}\eta }\Big{)}\|\delta_{t}e^{n}\|^{2}_{-1}	
  +\Big{(}
  \frac{2B^{2}}{\eta}+ \frac{7L^{2}}{2\varepsilon^{2}\eta}
  +\frac{3\tilde{\eta}}{2}\Big{)}\|\delta_{t}e^{n+1}\|^{2}_{-1}
  +\dfrac{L}{4\varepsilon}\|\delta_{tt}e^{n+1}\|^{2}.\\     
  &-\frac{1}{\varepsilon}\Big{(}f'(\phi(t^{n+\frac{1}{2}})) \frac{e^{n+1}+e^{n}}{2},\frac{e^{n+1}+e^{n}}{2}\Big{)}
  +\frac{L_{2}}{\varepsilon}\|\frac{e^{n+1}+e^{n}}{2}\|^{3}_{L^{3}},  \end{split}
 \end{equation}
 where
 \begin{equation}\label{t7-20}
  \begin{split}
\widetilde{C}_{1}^{n+1} =&
 \int_{t^{n}}^{t^{n+1}}\Big{(}\frac{1}{\eta}\|\partial_{tt}\Delta^{-1}\phi(t)\|_{-1}^{2}
+\frac{A^{2}}{\eta}\|\partial_{t}\nabla \phi(t)\|^{2}
+ \Big{(}\frac{\varepsilon^{2}}{\eta} + \frac{L^{2}}{ \varepsilon^{2}\tilde{\eta}} \Big{)} \|\partial_{tt}\nabla \phi(t)\|^{2}\\
&
+\Big{(}\frac{C_{2}}{\varepsilon^{2}\eta} + \frac{1}{\tilde{\eta}}\Big{)} \|\partial_{tt}\phi(t)\|_{-1}^{2}
+\frac{A^{2} }{\tilde{\eta}}\|\partial_{t}\nabla\Delta \phi(t)\|^{2}
+ \frac{\varepsilon^{2} }{\tilde{\eta}}\|\partial_{tt}\nabla \Delta \phi(t)\|^{2} \Big{)}{\rm d}t\\
&
+\int_{t^{n-1}}^{t^{n+1}}\Big{(}6\Big{(}
\frac{B^{2}}{\eta}+\frac{L^{2}}{4\varepsilon^{2}\eta}\Big{)}\|\partial_{tt}\phi(t)\|_{-1}^{2}
+ 6 \Big{(}\frac{B^{2} }{\tilde{\eta}}+\frac{L^{2}}{4 \varepsilon^{2}\tilde{\eta} }\Big{)}\|\partial_{tt}\nabla \phi(t)\|^{2}
\Big{)}{\rm d}t.
  \end{split}
\end{equation}

(iii) We now estimate the last two terms of the right hand side of (\ref{t7-2}).
The spectrum estimate (\ref{eigen}) leads to
\begin{equation}\label{spectrum}
  \varepsilon \|\nabla\frac{e^{n+1}+e^{n}}{2}\|^{2}_{L^{2}}
  +\frac{1}{\varepsilon}\Big{(}f'(\phi(t^{n+\frac{1}{2}}))\frac{e^{n+1}+e^{n}}{2}, \frac{e^{n+1}+e^{n}}{2}\Big{)}
  \geq -C_{0}\|\frac{e^{n+1}+e^{n}}{2}\|_{-1}^{2}.
\end{equation}
Applying (\ref{spectrum}) with a scaling factor $(1-\eta_{1})$ close to but smaller than 1, we get
\begin{equation}\label{spectrum1}
\begin{split}
  &-(1-\eta_{1})\frac{1}{\varepsilon}\Big{(}f'(\phi(t^{n+1}))\frac{e^{n+1}+e^{n}}{2},  \frac{e^{n+1}+e^{n}}{2}\Big{)}\\
  \leq& C_{0}(1-\eta_{1})\|\frac{e^{n+1}+e^{n}}{2}\|_{-1}^{2}+(1-\eta_{1})\varepsilon\|\nabla \frac{e^{n+1}+e^{n}}{2}\|^{2}.
  \end{split}
\end{equation}
On the other hand,
\begin{equation}\label{t9}
  -\frac{\eta_{1}}{\varepsilon}\Big{(}f'(\phi(t^{n+1}))\frac{e^{n+1}+e^{n}}{2},  \frac{e^{n+1}+e^{n}}{2}\Big{)}
  \leq \frac{L^{2}}{\varepsilon^{2}}\frac{\eta_{1}}{\eta_{2}}\|\frac{e^{n+1}+e^{n}}{2}\|_{-1}^{2}
  +\frac{\eta_{1}\eta_{2}}{4}\|\nabla\frac{e^{n+1}+e^{n}}{2}\|^{2}.
\end{equation}
Now, we estimate the $L^{3}$ term. By interpolating $L^{3}$ between $L^{2}$ and $H^{1}$ then using Poincare inequality for the error function, we get
\[\|\frac{e^{n+1}+e^{n}}{2}\|_{L^{3}}^{3}\leq K \|\nabla \frac{e^{n+1}+e^{n}}{2}\|^{\frac{d}{2}}
\|\frac{e^{n+1}+e^{n}}{2}\|^{\frac{6-d}{2}},\]
where K is a constant independent of $\varepsilon$ and $\tau$.
We continue the estimate by using
$\|\frac{e^{n+1}+ e^{n}}{2}\|^{2} \leq \|\nabla \frac{e^{n+1}+ e^{n}}{2}\| \|\frac{e^{n+1}+ e^{n}}{2}\|_{-1}$ to get
\begin{equation}\label{t20}
  \frac{L_{2}}{\varepsilon}\|\frac{e^{n+1}+e^{n}}{2}\|_{L^{3}}^{3}
\leq \frac{L_{2}}{\varepsilon}K \|\nabla \frac{e^{n+1}+e^{n}}{2}\|^{\frac{d}{2}+\frac{6-d}{4}}
\|\frac{e^{n+1}+e^{n}}{2}\|_{-1}^{\frac{6-d}{4}}
=G^{n+1}\|\nabla \frac{e^{n+1}+e^{n}}{2}\|^{2},
\end{equation}
where $G^{n+1}=\frac{L_{2}}{\varepsilon}K \|\nabla \frac{e^{n+1}+e^{n}}{2}\|^{\frac{d-2}{4}}
\|\frac{e^{n+1}+e^{n}}{2}\|_{-1}^{\frac{6-d}{4}}$.\\

\indent Now plugging equation (\ref{spectrum1}), (\ref{t9}) and (\ref{t20}) into (\ref{t7-2}), we get
   \begin{equation}\label{t11}
  \begin{split}
   &\frac{1}{2\tau }(\|e^{n+1}\|_{-1}^{2}-\|e^{n}\|_{-1}^{2})
   +\varepsilon\|\nabla\frac{e^{n+1}+e^{n}}{2}\|^{2}
   +\frac{A\tau+\varepsilon}{2}(\|\nabla e^{n+1}\|^{2}-\|\nabla e^{n}\|^{2})\\
   &+\frac{1}{\tau}\|\delta_{t}e^{n+1}\|_{-1}^{2}
   +A\tau\|\nabla \delta_{t}e^{n+1}\|^{2}
   +\frac{B}{2}(\|\delta_{t}e^{n+1}\|^{2}-\|\delta_{t}e^{n}\|^{2}+\|\delta_{tt}e^{n+1}\|^{2})\\
\lesssim  & \widetilde{C}_{1}^{n+1}\tau^{3}
+\Big{(}\frac{21\eta}{8}+(1-\eta_{1})\varepsilon+\frac{\eta_{1}\eta_{2}}{4}\Big{)}\|\nabla \frac{e^{n+1}+e^{n}}{2}\|^{2}
+\dfrac{\eta}{8} \|\nabla \frac{e^{n}+e^{n-1}}{2} \|^2
\\
&+\Big{(}\frac{2B^{2}}{\eta}+ \frac{5L^{2}}{2\varepsilon^{2}\eta }\Big{)}\|\delta_{t}e^{n}\|^{2}_{-1}	
+\Big{(}
\frac{2B^{2}}{\eta}+ \frac{7L^{2}}{2\varepsilon^{2}\eta}
+\frac{3\tilde{\eta}}{2}\Big{)}\|\delta_{t}e^{n+1}\|^{2}_{-1}
+\dfrac{L}{4\varepsilon}\|\delta_{tt}e^{n+1}\|^{2}.\\     
  &+\Big{(}C_{0}(1-\eta_{1})+\frac{L^{2}}{\varepsilon^{2}}\frac{\eta_{1}}{\eta_{2}}\Big{)}
  \|\frac{e^{n+1}+e^{n}}{2}\|_{-1}^{2}
   +G^{n+1}\|\nabla \frac{e^{n+1}+e^{n}}{2}\|^{2}.
   \end{split}
 \end{equation}
 Take $\eta_{1}=\varepsilon^{3}, \ \eta_{2}=\varepsilon,\ 
\eta=\varepsilon^{4}/{11}$, $\tilde{\eta}=\varepsilon^{-6}$,
such that
\[ \frac{L^{2}}{\varepsilon^{2}}\frac{\eta_{1}}{\eta_{2}}=L^{2}, \ \
\frac{22\eta}{8} + (1-\eta_{1})\varepsilon+\frac{\eta_{1}\eta_{2}}{4}
= \varepsilon-\frac{\varepsilon^{4}}{2},\]
and
\begin{equation}\label{eq:es65}
\Big{(}\frac{22\eta}{8}+(1-\eta_{1})\varepsilon+\frac{\eta_{1}\eta_{2}}{4}\Big{)}\|\nabla \frac{e^{n+1}+e^{n}}{2}\|^{2}
= (\varepsilon-\frac{\varepsilon^4}{2}) 
\|\nabla \frac{e^{n+1}+e^{n}}{2}\|^{2}.
\end{equation}
Take 
\begin{equation}
\tau \leq 
\frac{1}{2\Big(\frac{4B^{2}}{\eta}+\frac{6L^2}{\varepsilon^2\eta}
+\frac{3\tilde{\eta}}{2}\Big) } 
\lesssim \varepsilon^6,
\end{equation}
such that
\begin{equation} \label{eq:es66}
\Big(\frac{4B^{2}}{\eta}+\frac{6L^2}{\varepsilon^2\eta}
+\frac{3\tilde{\eta}}{2}\Big)\|\delta_{t}e^{n+1}\|^{2}_{-1}
\leq
\frac{1}{2\tau} \|\delta_{t}e^{n+1}\|^{2}_{-1}.
\end{equation}
Summing up (\ref{t11}), \eqref{eq:es65} and \eqref{eq:es66}, we get
\begin{equation}\label{t12}
   \begin{split}
   &\frac{1}{2\tau }(\|e^{n+1}\|_{-1}^{2}-\|e^{n}\|_{-1}^{2})
   +\frac{\varepsilon^4}{2}\|\nabla\frac{e^{n+1}+e^{n}}{2}\|^{2}
   +\frac{A\tau+\varepsilon}{2}(\|\nabla e^{n+1}\|^{2}-\|\nabla e^{n}\|^{2})\\
   &+\frac{1}{2\tau}\|\delta_{t}e^{n+1}\|_{-1}^{2}
   +A\tau\|\nabla \delta_{t}e^{n+1}\|^{2}
   +\frac{B}{2}(\|\delta_{t}e^{n+1}\|^{2}-\|\delta_{t}e^{n}\|^{2})
   +\left(\frac{B}{2}-\frac{L}{4\varepsilon}\right)\|\delta_{tt}e^{n+1}\|^{2}\\
   &+\dfrac{\varepsilon^4}{88}\left(\|\nabla\frac{e^{n+1}+e^{n}}{2}\|^2-\|\nabla \frac{e^{n}+e^{n-1}}{2}\|^2 \right)
   +11\Big(\frac{2B^{2}}{\varepsilon^4}+ \frac{5L^{2}}{2\varepsilon^{6}}\Big)\left(\|\delta_{t}e^{n+1}\|^{2}_{-1}-\|\delta_{t}e^{n}\|^{2}_{-1} \right)\\
\lesssim  & \widetilde{C}_{1}^{n+1}\tau^{3}
   +\Big{(}C_{0}(1-\varepsilon^3)+L^2\Big{)}
   \left(\frac12\|e^{n+1}\|_{-1}^{2}+\frac12\|e^{n}\|_{-1}^{2} \right)
   +G^{n+1}\|\nabla \frac{e^{n+1}+e^{n}}{2}\|^{2}.
   \end{split}
 \end{equation}
 Now, if $G^{n+1}$ is uniformly bounded by constant
 $\varepsilon^{4}/4$, we can multiply by $2\tau$ on both
 sides of inequality (\ref{t12}), and sum up for $n=1$ to
 $N$ to get the following estimate:
\begin{equation}\label{t13}
\begin{split}
&\|e^{N+1}\|_{-1}^{2}  
+\tau(A\tau+\varepsilon)\|\nabla e^{N+1}\|^{2}
+B\tau\|\delta_{t}e^{N+1}\|^{2}
+\dfrac{\tau\varepsilon^4}{44}\|\nabla\frac{e^{N+1}+e^{N}}{2}\|^2
\\
&
+22\tau\Big(\frac{2B^{2}}{\varepsilon^4}+\frac{5L^{2}}{2\varepsilon^{6}}\Big)\|\delta_{t}e^{N+1}\|^{2}_{-1}\\
&+\sum_{n=1}^{N}\left[\frac{\tau\varepsilon^4}{2}\|\nabla\frac{e^{n+1}+e^{n}}{2}\|^{2}
+\|\delta_{t}e^{n+1}\|_{-1}^{2}
+2A\tau^2\|\nabla \delta_{t}e^{n+1}\|^{2}
+\tau\big(B-\frac{L}{2\varepsilon}\big)\|\delta_{tt}e^{n+1}\|^{2}
\right]\\
\lesssim  & 2\widetilde{C}_{1}\tau^{4}
+\left(1+22\tau\Big(\frac{2B^{2}}{\varepsilon^4}+\frac{5L^{2}}{2\varepsilon^{6}}\Big)\right)\|e^{1}\|^{2}_{-1}
+B\tau\|e^{1}\|^{2}
+\left(\tau(A\tau+\varepsilon)+\dfrac{\tau\varepsilon^4}{176}\right)\|\nabla{e^{1}}\|^2
\\
&+\tau\Big{(}C_{0}+L^2\Big{)}\|e^{N+1}\|_{-1}^{2}
+2\tau\Big{(}C_{0}+L^2\Big{)}
\sum_{n=1}^N\|e^{n}\|_{-1}^{2},
\end{split}
\end{equation}
where
 \begin{equation}\label{t13-0}
 \begin{split}
   \widetilde{C}_{1}=&\sum_{n=0}^{n=N}\widetilde{C}_{1}^{n+1}\\
    \leq &
    \int_{0}^{T}\Big{(}\frac{1}{\eta}\|\partial_{tt}\Delta^{-1}\phi(t)\|_{-1}^{2}
    +\frac{A^{2}}{\eta}\|\partial_{t}\nabla \phi(t)\|^{2}
    + \Big{(}\frac{\varepsilon^{2}}{\eta} + \frac{L^{2}}{ \varepsilon^{2}\tilde{\eta}} \Big{)} \|\partial_{tt}\nabla \phi(t)\|^{2}\\
    &
    +\Big{(}\frac{C_{2}}{\varepsilon^{2}\eta} + \frac{1}{\tilde{\eta}}\Big{)} \|\partial_{tt}\phi(t)\|_{-1}^{2}
    +\frac{A^{2} }{\tilde{\eta}}\|\partial_{t}\nabla\Delta \phi(t)\|^{2}
    + \frac{\varepsilon^{2} }{\tilde{\eta}}\|\partial_{tt}\nabla \Delta \phi(t)\|^{2}\\
    &
    +12\Big{(}
    \frac{B^{2}}{\eta}+\frac{L^{2}}{4\varepsilon^{2}\eta}\Big{)}\|\partial_{tt}\phi(t)\|_{-1}^{2}
    + 12\Big{(}\frac{B^{2} }{\tilde{\eta}}+\frac{L^{2}}{4 \varepsilon^{2}\tilde{\eta} }\Big{)}\|\partial_{tt}\nabla \phi(t)\|^{2}
    \Big{)}{\rm d}t.\\
\lesssim & \varepsilon^{-\max\{\rho_{1}+4, \rho_{2}+6, \rho_{4}+2, \rho_{5}-8, \rho_{6}+8, \rho_{7}-2\}}.\\
 \end{split}
 \end{equation}
 Choose $\tau \leq 1/{(2C_{0}+2L^{2})}$, then we can get a
 finer error estimate by discrete Gronwall inequality
 and the assumption of first step error (\ref{ap:022-2}):
  \begin{equation}\label{t14}
  \begin{split}
   &\max_{1\leq n\leq N}\Big(
   \|e^{n+1}\|_{-1}^{2}
   +\tau(A\tau+\varepsilon)\|\nabla e^{n+1}\|^{2}
   +B\tau\|\delta_{t}e^{n+1}\|^{2}
   +\tau\varepsilon^{-6}\|\delta_{t}e^{n+1}\|^{2}_{-1}\Big)
   \\
   &+\sum_{n=1}^{N}\left[\frac{\tau\varepsilon^4}{2}\|\nabla\frac{e^{n+1}+e^{n}}{2}\|^{2}
   +\|\delta_{t}e^{n+1}\|_{-1}^{2}
   +2A\tau^2\|\nabla \delta_{t}e^{n+1}\|^{2}
   +\tau\big(B-\frac{L}{2\varepsilon}\big)\|\delta_{tt}e^{n+1}\|^{2}
   \right]
	\\
  \lesssim &\varepsilon^{-\sigma}
  \mathrm{exp }(4(C_{0}+L^{2}+1)T)\tau^{4}.\\
 \end{split}
 \end{equation}
  We prove this by induction. Assuming that
 the above estimate holds for all first $N$ time steps.
 Since $\tau \lesssim \varepsilon^{6}$,
 then the coarse estimate \eqref{cet2-3} leads to
 \begin{equation}\label{cet2-3-0}
  \begin{split}
   &\|e^{N+1}\|_{-1}^{2}
   +\frac{\varepsilon \tau}{4}\|\nabla \frac{e^{N+1}+e^{N}}{2}\|^{2}
   +A\tau^{2}\|\nabla e^{N+1}\|^{2}
   +B\tau\|\delta_{t}e^{N+1}\|^{2}\\
  \lesssim &
 \varepsilon^{-\max\{\rho_{1}+1, \rho_{2}+3, \rho_{4}-1, \rho_{6}+5\}} \tau^{4}
  +\varepsilon^{-\sigma}
  \mathrm{exp }(4(C_{0}+L^{2})T)\tau^{4}\\
  \lesssim &  \varepsilon^{-\sigma} \tau^{4}.
   \end{split}
 \end{equation}
  To obtain $G^{N+1}\leq \varepsilon^{4}/4$, using (\ref{cet2-3-0}), we easily get
  \begin{equation}\label{t21}
  \begin{split}
  G^{N+1}=&\frac{L_{2}}{\varepsilon}K \|\nabla \frac{e^{N+1}+e^{N}}{2}\|^{\frac{d-2}{4}}
\|\frac{e^{N+1}+e^{N}}{2}\|_{-1}^{\frac{6-d}{4}}\\
\leq &\frac{L_{2}}{\varepsilon}K' \Big{(}
 \varepsilon^{-\sigma-1}
   \tau^{3}\Big{)}^{\frac{d-2}{8}} 
   \Big{(}
    \varepsilon^{-\sigma}
   \tau^{4}\Big{)}^{\frac{6-d}{8}}
   \leq \dfrac{\varepsilon^{4}}{4}.
  \end{split}
  \end{equation}
 Solving (\ref{t21}), we get
 \begin{equation}\label{t16}
 \begin{split}
   \tau
  \lesssim \varepsilon^{\frac{1}{18-d}(4\sigma+d +38)}.\\
  \end{split}
\end{equation}
The proof is complete.
\end{proof}

\begin{remark}\label{rem:conv}
  Note that the spectral estimate \eqref{eigen} is essential
  to the proof. Moreover, since the Crank-Nicolson
  discretization has no numerical diffusion, it is harder to
  bound the error growth than the BDF2 scheme. Here, we need
  $B>\frac{L}{2\varepsilon}$ to get the convergence, while
  in SL-BDF2 scheme, there is no such a requirement
  \cite{wang_two_2017}.
\end{remark}

\begin{remark}\label{rem:conv2}
	We used $L^\infty$ bound assumption of the exact solution to
	handle the high order  term $\Big((R_4^{n+1})^2, \frac{e^{n+1}+e^n}{2}\Big)$ occured in \eqref{j3}. 
	There is another way to control this term. By Cachy-Schwartz inequality, one only need to control
	$\|R_4^{n+1}\|^4_{L^4}$ and $\|\frac{e^{n+1}+e^n}{2}\|^2$.
	The $L^4$ term can be controlled by using Sobolev interpolation inequality as we did for the $\|\frac{e^{n+1}+e^n}{2}\|_{L^3}^3$ term. The $L^2$ term of the error function can be controlled by a $\frac{\varepsilon^4}{8}\|\nabla\frac{e^{n+1}+e^n}{2}\|^2$ term and $\frac{1}{\tau}\|\frac{e^{n+1}+e^n}{2}\|_{-1}^2$.
\end{remark}

\section{Numerical results}

In this section, we numerically verify our
schemes are energy
stable and second order accurate in time.

We use the commonly used  double-well potential 
$F(\phi)=\frac{1}{4}(\phi^{2}-1)^{2}$.
It is a common practice to modify $F(\phi)$ to have a
quadratic growth rate for $|\phi|>1$ (since physically
$|\phi|\leq 1$), such that a global Lipschitz condition is
satisfied \cite{shen_numerical_2010},
\cite{condette_spectral_2011}. To get a $C^4$ smooth
double-well potential with quadratic growth, we introduce
$\tilde{F}(\phi) \in C^{\infty}(\mathbf{R})$ as a smooth
mollification of
\begin{equation}\label{efnew}
\hat{F}(\phi)=
\begin{cases}
\frac{11}{2}(\phi-2)^{2}+6(\phi-2)+\frac94, &\phi>2, \\
\frac{1}{4 }(\phi^{2}-1)^{2}, &\phi\in [-2,2], \\
\frac{11}{2}(\phi+2)^{2}+6(\phi+2)+\frac94, &\phi<-2.
\end{cases}
\end{equation}
with a mollification parameter much smaller than 1, to
replace $F(\phi)$. Note that the truncation points $-2$ and
$2$ used here are for convenience only. Other values outside
of region $[-1,1]$ can be used as well.  For simplicity, we
still denote the modified function $\tilde{F}$ by $F$.

To test the numerical scheme, we solve \eqref{eq:CH} in
tensor product 2-dimensional domain
$\Omega=[-1,1]\times[-1,1]$. We use a Legendre Galerkin
method similar as in
\cite{shen_efficient_2015, yang_yu_efficient_2017} for spatial
discretization.  Let $L_k(x)$ denote the Legendre polynomial
of degree $k$. We define
\[
V_M = \mbox{span}\{\,\varphi_k(x)\varphi_j(y),\ k,j=0,\ldots, M-1\,\}\in H^1(\Omega),
\]
where
$\varphi_0(x) = L_0(x); \varphi_1(x)=L_1(x);  \varphi_k(x)=L_k(x)-L_{k+2}(x), k=2,\ldots, M\!-\!1
$,
be the Galerkin approximation space for both $\phi^{n+1}$ and $\mu^{n+1}$. Then the full discretized
form for the SL-CN scheme reads:

Find $(\phi^{n+1}, \mu^{n+\frac12}) \in (V_M)^{2}$ such that
\begin{equation}\label{eq:bdf2:fulldis1}
\frac{1}{\tau}(\phi^{n+1}-\phi^{n}, \omega)=-\gamma(\nabla \mu^{n+\frac12},\nabla \omega), 
\quad \forall\, \omega \in V_M,
\end{equation}
\begin{equation}\label{eq:bdf2:fulldis2}
\begin{split}
(\mu^{n+\frac12},\varphi)={} &\frac{\varepsilon}{2} (\nabla (\phi^{n+1}+\phi^n),\nabla \varphi) +
\frac{1}{\varepsilon}(f(\frac32\phi^{n}-\frac12\phi^{n-1}),\varphi)
\\
&\quad+A\tau (\nabla \delta_{t}\phi^{n+1},\nabla \varphi)
+B(\delta_{tt}\phi^{n+1},\varphi), \quad \forall\, \varphi \in
V_M.
\end{split}
\end{equation}
This is a linear system with constant coefficients for
$(\phi^{n+1}, \mu^{n+\frac12})$, which can be efficiently solved.
We use a spectral transform with doubled quadrature points to
eliminate the aliasing error and efficiently evaluate the
integration $(f(\frac32\phi^n-\frac12\phi^{n-1}),\varphi)$ in equation
\eqref{eq:bdf2:fulldis2}.

We take $\varepsilon=0.05$ and $M=63$ and use two different
initial values to test the stability and accuracy of the
proposed schemes:
\begin{enumerate}
	\item[(1)] $\{\phi_0(x_i,y_j) \}\in\, {\bf{R}}^{2M\times2M}$
	with $x_i, y_j$ are tensor product Legendre-Gauss
	quadrature points and $\phi_0(x_i,y_j)$ is a uniformly
	distributed random number between $-1$ and $1$ (shown in
	the left picture of Fig. \ref{fig1});
	\item[(2)] The solution of the Cahn-Hilliard equation at
	$t=64\varepsilon^3$ which takes $\phi_0$ as its initial
	value (Denoted by $\phi_1$ shown in the middle picture of
	Fig. \ref{fig1}).
\end{enumerate}
\begin{figure}[htbp]
	\centering
	\includegraphics[width=\textwidth]{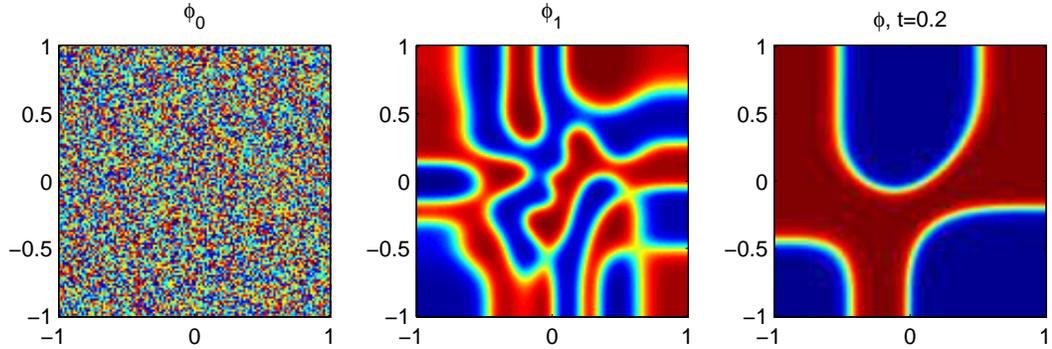}
	\caption{The two random initial values $\phi_0$,
		$\phi_1$ and the state of $\phi_1$ evolves $0.2$ time
		unit according to the Cahn-Hilliard equation
		\eqref{eq:CH} with $\gamma=1$.}
	\label{fig1}
\end{figure}

\subsection{Stability results}

Table \ref{tstab0:CHCNLL} shows the required minimum values
of $A$ (resp. $B$) with different $\gamma$, $B$ (resp. $A$)
and $\tau$ values for stably solving (not blow up in 4096
time steps) the Cahn-Hilliard equation \eqref{eq:CH} with
initial value $\phi_0$.  The results for the initial value
$\phi_1$ are similar. From this table, we observe that the
SL-CN scheme is stable with $A=0, B=0$ when $\tau$ is small
enough.  If we take $A=0$, then $B=16$ will make the scheme
unconditionally stable, the values of $\gamma$ has only a
very small effect on the values of $B$. But when we fix
$B$, the case $\gamma=1$ requires a much larger $A$ value to
make the scheme stable than $\gamma=0.0025$ case, this is
consistent to our analysis.

\begin{table}[htbp]
	\begin{tabular}{|c|c|c|c|c|c|c|c|c|}
		\hline
		\multirow{3}{*}{$\tau$ }
		&  \multicolumn{4}{|c|}{Minimum A required}
		&  \multicolumn{4}{|c|}{Minimum B required}
		\\ \cline{2-9}&  \multicolumn{2}{|c|}{$\gamma=0.0025$ }
		&  \multicolumn{2}{|c|}{$\gamma=1$ }
		&  \multicolumn{2}{|c|}{$\gamma=0.0025$ }
		&  \multicolumn{2}{|c|}{$\gamma=1$ }
		\\ \cline{2-9} & $B=0$ & $B=10$ & $B=0$ & $B=10$ & $A=0$ & $A=4$ & $A=0$ & $A=4$\\ \hline
		10  & 0.16 & 0.005 & 1 & 1 & 16 & 8 & 16 & 0\\ \hline
		1  & 0.16 & 0 & 8 & 1 & 16 & 16 & 16 & 2\\ \hline
		0.1  & 0.16 & 0 & 32 & 1 & 8 & 4 & 16 & 8\\ \hline
		0.01  & 0.08 & 0 & 64 & 2 & 8 & 4 & 16 & 8\\ \hline
		0.001  & 0 & 0 & 64 & 0 & 0 & 0 & 8 & 8\\ \hline
		0.0001  & 0 & 0 & 64 & 0 & 0 & 0 & 8 & 4\\ \hline
		1E-05  & 0 & 0 & 32 & 0 & 0 & 0 & 2 & 2\\ \hline
		1E-06  & 0 & 0 & 0 & 0 & 0 & 0 & 0 & 0\\ \hline
	\end{tabular} 
	\centering 
	\caption{The minimum values of $A$(resp $B$) (only values $\{0, 2^i,i=0,\ldots,7\}\times\gamma$ are tested for $A$, only values $\{0, 2^i,i=0,\ldots,7\}$ are tested for $B$) 
      to make scheme SL-CN stable when $\gamma$, $B$ (resp $A$) and $\tau$ taking different values.
	}
	\label{tstab0:CHCNLL}
	\end{table}

Figure \ref{fig:2Energy} presents the discrete energy
dissipation of the SL-CN scheme using several
time step-sizes. We see clearly the energy decaying property is
maintained. Moreover, as $t$ increases, the differences between $E$ and $E_{CN}$ get smaller and smaller.

\begin{figure}[htbp]
	\centering 
	\includegraphics[width=0.6\textwidth]{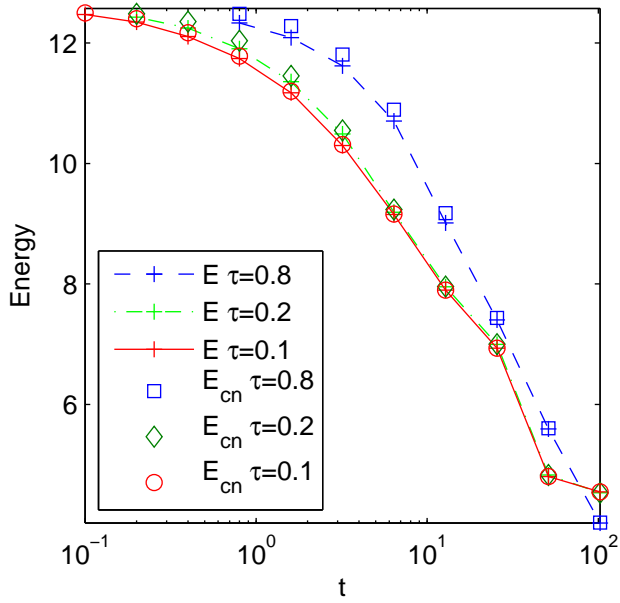}
	\caption{The discrete energy dissipation of the SL-CN
		scheme solving the Cahn-Hilliard equation with
		initial value $\phi_1$, and relaxation parameter
		$\gamma=0.0025$. Stability constant $A=1, B=20$ are
		used. }
	\label{fig:2Energy}
\end{figure}

\subsection{Accuracy results}

We take initial value $\phi_1$ to test the accuracy of the
two schemes.  The Cahn-Hilliard equation with
$\gamma=0.0025$ are solved from $t=0$ to $T=12.8$. To
calculate the numerical error, we use the numerical result
generated using $\tau=10^{-3}$ as a reference of exact
solution.  The results are given in Table
 \ref{tbl:conv:SL-CN}. We
see that the scheme is second order accuracy in
$H^{-1}, L^2$ and $H^1$ norm.  

\begin{table}[htbp]
	\begin{tabular}{|c|c|c|c|c|c|c|}
		\hline
		$\tau$ & $H^{-1}$ Error & Order & $L^2$ Error & Order & $H^1$ Error & Order\\ \hline
		0.16 & 7.98E-02 &  & 5.20E-01 &  & 6.40E+00 &  \\ \hline
		0.08 & 2.18E-02 & 1.87 & 1.64E-01 & 1.66 & 2.18E+00 & 1.56 \\ \hline
		0.04 & 5.95E-03 & 1.87 & 4.57E-02 & 1.85 & 6.08E-01 & 1.84 \\ \hline
		0.02 & 1.54E-03 & 1.95 & 1.16E-02 & 1.97 & 1.55E-01 & 1.97 \\ \hline
		0.01 & 3.86E-04 & 2.00 & 2.90E-03 & 2.00 & 3.87E-02 & 2.00 \\ \hline
		0.005 & 9.38E-05 & 2.04 & 7.05E-04 & 2.04 & 9.39E-03 & 2.04 \\ \hline
	\end{tabular} 
	\centering 
	\caption{The convergence of the SL-CN scheme
		with $B=40$, $A=0.1$
		for the Cahn-Hilliard equation with initial value $\phi_1$,parameter $\gamma=0.0025$.
		The errors are calculated at $T=12.8$.}
	\label{tbl:conv:SL-CN}
\end{table}

\section{Conclusions}

We study the stability and convergence of a stabilized linear
Crank-Nicolson scheme for the Cahn-Hilliard phase field
equation.  The scheme includes two second-order
stabilization terms, which guarantee the unconditional
energy dissipation theoretically. Use a standard error
analysis procedure for parabolic equation, we get an error
estimate with a prefactor depending on $1/\varepsilon$
exponentially. We then refine the result by using a spectrum
estimate of the linearized Cahn-Hilliard operator and
mathematical induction to get an optimal (second-order) convergence
estimate in $l^{\infty}(0,T;H^{-1})\cap l^{2}(0,T;H^{1})$
norm with a prefactor depends only on some lower degree
polynomial of $1/\varepsilon$.  Numerical results are
presented to verify the stability and accuracy of the scheme.

\section*{Acknowledgment}
This work is partially supported by Major Program of NNSFC
under Grant 91530322 and NNSFC Grant 11371358,11771439.
The authors thank Prof. Jie Shen and Prof. Xiaobing Feng for helpful discussions.

\section*{References}
\bibliographystyle{alpha}  
\newcommand{\etalchar}[1]{$^{#1}$}

\end{document}